\def\dbF{{\mathbb{F}}}
\def\dbN{{\mathbb{N}}}
\def\dbP{{\mathbb{P}}}
\def\dbR{{\mathbb{R}}}
\def\dbS{{\mathbb{S}}}
\def\e{\varepsilon}
\def\th{\theta}
\def\Th{\Theta}
\def\3n{\negthinspace \negthinspace \negthinspace }
\def\2n{\negthinspace \negthinspace }
\def\1n{\negthinspace }
\def\ns{\noalign{\smallskip} }
\def\ds{\displaystyle}
\def\G{\Gamma}
\def\Th{\Theta}
\def\Om{\Omega}
\def\cA{{\cal A}}
\def\cC{{\cal C}}
\def\cF{{\cal F}}
\def\cJ{{\cal J}}
\def\cL{{\cal L}}
\def\cP{{\cal P}}
\def\cU{{\cal U}}
\def\mE{{\mathbb{E}}}
\def\no{\noindent}
\def\ss{\smallskip}
\def\ms{\medskip}
\def\bs{\bigskip}
\def\q{\quad}
\def\qq{\qquad}
\def\liminf{\mathop{\underline{\rm lim}}}
\def\wt{\widetilde}
\def\cd{\cdot}
\def\cds{\cdots}
\def\ae{\hbox{\rm a.e.{ }}}
\def\as{\hbox{\rm a.s.{ }}}
\def\({\Big (}
\def\){\Big )}
\def\[{\Big[}
\def\]{\Big]}
\def\={\buildrel \triangle \over =}
\def\ee{\end{equation}}
\def\bea{\begin{eqnarray}}
\def\eea{\end{eqnarray}}
\def\bt{\begin{theorem}}
\def\et{\end{theorem}}
\def\bc{\begin{corollary}}
\def\ec{\end{corollary}}
\def\bl{\begin{lemma}}
\def\el{\end{lemma}}
\def\bp{\begin{proposition}}
\def\ep{\end{proposition}}
\def\br{\begin{remark}}
\def\er{\end{remark}}
\def\ba{\begin{array}}
\def\ea{\end{array}}
\def\bde{\begin{definition}}
\def\ede{\end{definition}}
\newtheorem{lemma}{Lemma}[section]
\newtheorem{remark}{Remark}[section]
\newtheorem{example}{Example}[section]
\newtheorem{theorem}{Theorem}[section]
\newtheorem{corollary}{Corollary}[section]
\newtheorem{definition}{Definition}[section]
\newtheorem{proposition}{Proposition}[section]
\newtheorem{assumption}{Assumption}[section]
\title{\bf Time-inconsistent Linear Quadratic Optimal Control Problem for Forward-Backward Stochastic Differential Equations\footnote{This work is supported by the NSF of China under grants 12025105, 11971333 and 11931011, and by the Science Development Project of Sichuan University under grant 2020SCUNL201.} }
\author{Qi L\"u \footnote{School of Mathematics, Sichuan University, Chengdu, 610064, China  (lu@scu.edu.cn)}\quad and \quad Bowen Ma \footnote{College of Mathematics and Physics, Chengdu University of Technology, Chengdu, 610059, China  (albertmabowen@gmail.com)} }
\date{}
\begin{document}
\maketitle
\begin{abstract}
We study the time-inconsistent linear quadratic optimal control problem  for forward-backward stochastic differential equations with potentially indefinite cost weighting matrices for both the state and the control variables. Our research makes two contributions.  Firstly, we introduce a novel type of Riccati equation system with parameters and constraint conditions, known as the generalized equilibrium Riccati equation. This equation system offers a comprehensive solution for the closed-loop equilibrium strategy of the problem at hand. Secondly, we establish the well-posedness of the generalized equilibrium Riccati equation for the one-dimensional case, provided certain conditions are met.
\end{abstract}

\no{\bf 2020 Mathematics Subject
	Classification}. Primary 93E20, 49N10.

\bs

\no{\bf Key Words}. stochastic linear quadratic
control problem, forward-backward stochastic differential equation, time-inconsistency.

\section{Introduction}\label{sec-intro}

Let $(\Omega,\cF,\dbF,\dbP)$ be a complete  filtered probability space, on which a standard  one-dimensional   Brownian motion $W(\cd)$ is defined, and $\dbF=\{\cF_t \}_{t\geq 0}$ is  the natural filtration generated by $W(\cd)$.

For a matrix $M$, we use $M^\dagger$
to represent the Moore-Penrose inverse of $M$ and $M^\top$
to represent the transpose of $M$. In addition, for a positive integer $n$, $\dbS^n$
denotes the space consisting of all $n\times n$ symmetric matrices, and $\dbS^n_+$ 
denotes the space consisting of all positive semidefinite matrices in $\dbS^n$. Furthermore, $|M|_\infty$ 
represents the spectral norm of a matrix $M\in \dbR^{n\times m}$, which is equal to the square root of the largest eigenvalue of $M^\top M$. 

We define some Banach spaces for $k\in \dbN$, $t\in [0,T]$, and $p\in[1,\infty)$, $q\in [1,\infty)$.

Firstly, $L_{\cF_t}^p(\Omega;\dbR^k)$ is the Banach space consisting of all $\cF_t$-measurable random variables $\xi: \Omega\to \dbR^k$
such that $\mE |\xi|_{\dbR^k}^p<\infty$, with the canonical norm.

Next, $L_{\dbF}^p(\Omega;C([t,T];\dbR^k))$ is the Banach space of all $\dbR^k$-valued $\dbF$-adapted continuous stochastic processes $\phi(\cdot)$. The norm for this space is defined as $|\phi(\cdot)|_{L_{\dbF}^p(\Omega;C([t,T];\dbR^k))} = \left(\mE \max\limits_{s\in[t,T]}|\phi(s)|_{\dbR^k}^p \right)^{1/p}$.

Lastly, $L_{\dbF}^p(\Omega;L^q(t,T;\dbR^k))$ is a Banach space consisting of $\dbR^k$-valued $\dbF$-adapted stochastic processes $\phi(\cdot)$ defined on the product space $\Omega\times [t,T]$ such that $\mE \left( \int_{t}^{T}|\phi(s)|^q_{\dbR^k}ds \right)^{p/q} < \infty$. The norm of this Banach space is given by the canonical norm. We can also write $L_{\dbF}^p(\Omega;L^p(t,T;\dbR^k))$
simply as $L^p_{\dbF}(t,T;\dbR^k)$.

Let $T>0$ and $(t,x)\in [0,T)\times L_{\cF_t}^2(\Omega;\dbR^n)$. Consider the following controlled \textit{linear forward-backward stochastic differential equation} (FBSDE, for short) on the time horizen $[t,T]$:
\begin{equation}\label{s4.1-stat}
\begin{cases}\ds
dX(s)=(A(s)X(s)+B(s)u(s))ds +(C(s)X(s)+D(s)u(s))dW(s), & s\in [t,T]\\ \ns\ds
dY(s)=-\big(\widehat{A}(s)X(s)+\widehat{B}(s)u(s)+\widehat{C}(s)Y(s)  +\widehat{D}(s)Z(s)\big)ds+Z(s)dW(s), & s\in [t,T]\\  \ns\ds
X(t)=x,\q Y(T)=HX(T),
\end{cases}
\end{equation}
where $A(\cd),C(\cd)\in L^\infty(0,T; \dbR^{n\times n})$, $B(\cd),D(\cd)\in L^\infty(0,T; \dbR^{n\times k}),$ $\widehat{A}(\cd)\in L^\infty(0,T;\dbR^{m\times n})$, $\widehat{B}(\cd)\in  L^\infty(0,T; \dbR^{m\times k})$, and $\widehat{C}(\cd),\widehat{D}(\cd)\in L^\infty(0,T; \dbR^{m\times m})$, $H\in \dbR^{m\times n}$, and the control $u(\cd) \in \cU[t,T]\= L_{\dbF}^2(t,T;\dbR^{k})$.

The equation \eqref{s4.1-stat} consists of a (forward) stochastic differential equation (SDE, for short) and a backward stochastic differential equation (BSDE, for short). Since $Y(\cdot)$ and $Z(\cdot)$ do not appear in the SDE in \eqref{s4.1-stat}, one can first solve the SDE to obtain $X(\cdot)$, and then solve the BSDE to obtain $(Y(\cdot), Z(\cdot))$. Therefore, based on the classical well-posedness of SDEs and BSDEs, it is known that for any $(t, x)\in [0,T)\times L_{\mathcal{F}_t}^2(\Omega;\mathbb{R}^n)$  and $u(\cdot)\in \mathcal{U}[t,T]$, the equation \eqref{s4.1-stat} has a unique solution $(X(\cdot), Y(\cdot), Z(\cdot))\in L_{\mathbb{F}}^2(\Omega; C([t,T];\mathbb{R}^n))\times L_{\mathbb{F}}^2(\Omega; C([t,T];\mathbb{R}^m))\times L_{\mathbb{F}}^2(t,T;\mathbb{R}^m)$  (e.g., \cite[Sections 3.1 and 4.1]{Lu-2021}).

We introduce the following  cost functional 
\begin{equation}\label{s4.1-cost}
\begin{aligned}
\cJ(t,x;u(\cd))=&\frac{1}{2}\mE_t\Big[ \int_{t}^{T} \big(\langle Q(s,t)X(s),X(s) \rangle +\langle R(s,t)u(s),u(s) \rangle+ \langle M(s,t)Y(s),Y(s) \rangle  \\ & \qq +  \langle N(s,t)Z(s),Z(s) \rangle \big)ds+  \langle G_1(t)X(T),X(T) \rangle + \langle G_2(t)Y(t),Y(t) \rangle \Big],
\end{aligned}
\end{equation}
where 	$\mE_t:=\mE(\cd \,|\, \cF_t)$ denotes the conditional expectation with respect to $\cF_t$,  $G_1(\cd)\in C([0,T];\dbR^n), \\G_2(\cd)\in C([0,T];\dbR^m)$,   $Q(\cd,\cd)\in C([0,T]^2; \dbS^{n}),$ $M(\cd,\cd),N(\cd,\cd)\in C([0,T]^2; \dbS^{m})$ and  $R(\cd,\cd)\in C([0,T]^2; \dbS^{k})$.

For  any  $u(\cd)\in \cU[t,T]$, the cost functional $\cJ(t,x;u(\cd))$ is well defined.  Naturally, we can pose a standard optimal control problem as follows:

\ss

\textbf{Problem (TI-FBSLQ).} For any $(t,x)\in [0,T)\times L^2_{\cF_t}(\Om;\dbR^n)$, find a control $\bar{u}(\cd)\in \cU[t,T]$ such that
\begin{equation}\label{s4.1-essinf}
\cJ(t,x;\bar{u}(\cdot))=\inf_{u(\cdot)\in
\mathcal{U}[t,T]}\cJ(t,x;u(\cdot)).
\end{equation}
\begin{remark}
Recently, Wang et al. \cite{H.Wang-2022} made an interesting discovery regarding optimal control problems for FBSDEs, that is, they found that these problems exhibit a characteristic of being \textit{time-inconsistent}. In light of this, we refer to the aforementioned problem as the TI-FBSLQ problem, with the abbreviation "TI" signifying the \textit{time-inconsistency}. 
\end{remark}

For Problem (TI-FBSLQ), a control $\bar{u}(\cdot)\in\mathcal{U}[t,T]$ that satisfies \eqref{s4.1-essinf} is referred to as a ``pre-committed optimal control". Although the pre-committed optimal control $\bar{u}(\cdot)$ is optimal for the cost functional $\mathcal{J}(t,x;\cdot)$ for any fixed initial pair $(t,x)$, it may not be practical in practice as it may not remain optimal at later times. There are two main reasons in order:
\begin{itemize}
\item[\textbf{(1)}] In the cost functional \eqref{s4.1-cost}, the coefficients $Q(\cd,t),R(\cd,t),M(\cd,t),N(\cd,t),G_1(t),G_2(t)$ depend on the initial time $t$. This subjective time-preference leads to time-inconsistent behavior (see \cite{Dou-2020,Lu-2023,Yong-2014,Yong-2017} and the rich reference therein).  For example, if we assume:
$$
\begin{cases}\ds
H=0,\,\widehat{A}(\cd)=\widehat{C}(\cd)=0,\widehat{B}(\cd)=\widehat{D}(\cd)=0,\\
\ns\ds	M(\cd,\cd)=0,\q N(\cd,\cd)=0,\q G_2(\cd)=0,
\end{cases}
$$
then Problem (TI-FBSLQ) will reduce to the model  in \cite{Lu-2023,Yong-2017}, where an illustrative example is provided.

\item [\textbf{(2)}]  Even if there is no subjective time-preference, i.e.,  the coefficients $Q(\cd,t),R(\cd,t),M(\cd,t),\\N(\cd,t),G_1(t),G_2(t)$ do not depend on the initial time $t$,  Problem (TI-FBSLQ) remains time-inconsistent due to the forward-backward structure of the control system \eqref{s4.1-stat} (e.g., \cite{H.Wang-2022}). For instance,  consider the following simple example: 
$$
\begin{aligned}
m=n,\,\; H=I,\,\; \widehat{A}(\cd)=\widehat{C}(\cd)=0,\,\; \widehat{B}(\cd)=\widehat{D}(\cd)=0,\,\;  G_2(\cd)\not=0.
\end{aligned}
$$
In the cost functional \eqref{s4.1-cost},   the term
$
\langle G_2(t) Y(t),Y(t)\rangle=\langle G_2(t)\mE_t(X(T)),\mE_t(X(T))\rangle
$ appears,   which naturally results in time-inconsistency (see \cite{Hu-2012, Hu-2017, Yong-2017}). 
\end{itemize}

To address the issue of time inconsistency, we propose the following definition.

\begin{definition}\label{s4.1-de1}
A matrix-valued function ${\Theta}(\cdot)\in L^2 (0,T;\mathbb{R}^{k\times n})$ is called the closed-loop equilibrium strategy of Problem (TI-FBSLQ) if for any sequence
$\{\e_j\}_{j=1}^\infty\subset (0,+\infty)$ converging to $0$, $(t,x)\in [0,T)\times L_{\mathcal{F}_t}^2(\Omega;\mathbb{R}^n)$ and
$v\in L_{\mathcal{F}_t}^2(\Omega;\mathbb{R}^k)$, 
\ss
\begin{equation}\label{}
\varliminf_{j \to \infty} \frac{\cJ\left(t, \overline{X}(t) ;
u^{\varepsilon_j}(\cdot)\right)-\cJ\left(t, \overline{X}(t) ;
\bar{u}(\cdot)\right)}{\varepsilon_j} \geq 0,\qquad
\mathbb{P}\mbox{\rm-a.s.}. 
\end{equation}
Here 
\begin{equation}
\begin{cases}
\ds	d\overline{X}(s)=(A(s)+B(s)\Theta(s))\overline{X}(s)ds +(C(s)+D(s)\Theta(s))\overline{X}(s)dW(s), & s\in [t,T], \\
\ns\ds	d\overline{Y}(s)=-\big[ \big(\widehat{A}(s) + \widehat{B}(s)\Theta(s)\big)\overline{X}(s)+\widehat{C}(s)\overline{Y}(s) +\widehat{D}(s)\overline{Z}(s)\big]ds+\overline{Z}(s)dW(s), & s\in [t,T],\\
\ns\ds	\overline{X}(t)=x,\q \overline{Y}(T)=H\overline{X}(T),
\end{cases}
\end{equation}
\begin{equation}\label{s4.1-de1-eq1}
\bar{u}(s)=\Theta(s)\overline{X}(s),\q
u^{\e_j}(s)=\chi_{[t,t+\e_j]}(s)v+{\Theta}(s){X}^{\e_j}(s),
\end{equation}
and $ X^{\e_j}(\cd)$ is the solution of equation \eqref{s4.1-stat}
corresponding to the control $u^{\e_j}(\cdot)$. 
\end{definition}
\begin{remark}
The variation of control $\bar{u}(\cdot)$ in Definition \ref{s4.1-de1} is given by $\chi_{[t,t+\e_j]}v+\Theta(s)X^{\e_j}(s)$, which is different from the general form $\chi_{[t,t+\e_j)}u(\cdot)$ used in \cite{Dou-2020, Lu-2023}. Although Definition \ref{s4.1-de1} is weaker, it is still useful, as shown in \cite{Hu-2012, Hu-2017}. We adopt this definition to facilitate the analysis of our problem using the variational approach. 
\end{remark}

In what follows, for simplicity of notations, for ${\Theta}(\cdot)\in L^2 (0,T;\mathbb{R}^{k\times n})$, we denote
\begin{equation*}
\begin{cases}
\ds A_\Theta(s):=A(s)+B(s)\Theta(s) ,\\ \ns\ds  C_\Theta(s):= C(s)+D(s)\Th(s), \\ \ns\ds \widehat{A}_\Theta(s):=\widehat{A}(s)+\widehat{B}(s)\Th(s), 
\end{cases}
\end{equation*}

To study the closed-loop equilibrium strategy of Problem (TI-FBSLQ), we need to introduce  the following assumption:
\begin{assumption}\label{s4.1-H1}
There exists a constant $\cC>0$, such that  for any $0\leq t\leq \tau \leq s\leq T$, it holds that
\begin{equation}
\begin{aligned}
&|Q(s,t)-Q(s,\tau)|_\infty + |R(s,t)-R(s,\tau)|_\infty+ |M(s,t)-M(s,\tau)|_\infty \\&\, +|N(s,t)-N(s,\tau)|_\infty+|G_1(t)-G_1(\tau)|_\infty+|G_2(t)-G_2(\tau)|_\infty \leq \mathcal{C} |t-\tau|.  
\end{aligned}
\end{equation}
\end{assumption}
Here and in what follows,  to simplify the expression, we make two conventions when there is no ambiguity: First, we denote by $\cC$ a generic constant which may vary from  line to line; Second,  when a function in the equation  involves only a  single time variable, we omit it, whereas with two time variables, we explicitly  specify them.

\ss 

For $\theta_0 (\cd) \in L^2(0,T;\dbR^{k\times n})$, consider the following equation:
\begin{equation}\label{s4.1-cor1-eq1}
\begin{cases}\ds
\frac{dP_1(s;t)}{ds}+ P_{1}(s;t) A_\Theta  +A_\Theta^\top  P_1(s;t) \\\ns\ds\q   +C_\Theta^\top  P_{1}(s;t)C_\Theta +Q(s,t) +\Th^\top R(s,t)\Th=0, & 0\leq t\leq s\leq T,\\
\ns\ds \frac{dP_2}{ds}+ P_2A_\Theta+  \widehat{A}_\Theta  +\widehat{C}P_2 +\widehat{D} P_2C_\Theta =0, & 0\leq t\leq s\leq T,\\
\ns\ds \frac{ dP_3(s;t)}{ds} + P_3(s;t)A_\Theta 	+A_\Theta ^\top P_3(s;t)   +C_\Theta ^\top P_3(s;t)C_\Theta \\ \ns\ds \q    +P_2^\top M(s,t)P_2  + C_\Theta ^\top P_2^\top N(s,t) 	P_2 C_\Theta  =0, &  0\leq t\leq s\leq T,\\
\ns\ds P_1(T;t)=G_1(t),	\q P_2(T)=H, \q 	P_3(T;t)=0, & 0\leq t\leq T.
\end{cases}
\end{equation}
where
\begin{equation}\label{s4.1-cor1-eq2}
\begin{aligned}
\Theta(t)&=-\big[R(t,t)+D(t)^\top
\big(P_1(t;t)  \! +  \! P_3(t;t)  \! +  \! P_2(t)^\top N(t,t)P_2(t) \big)D(t)\big]^{\dagger}   \\&\q  \times \big[B(t)^\top\big( P_1(t;t)  \! +  \! P_3(t;t)\big)  +D(t)^\top
\big(P_1(t;t) +P_3(t;t)   +P_2(t) ^\top N(t,t)P_2(t) \big)C(t)\\&\qq   + \big(\widehat{B}(t)^\top+ {B}(t)^\top P_{2}(t)^\top  + D (t)^\top P_{2}(t)^\top  \widehat{D}(t) ^\top \big ) G_2(t)P_{2}(t) \big] + \th_0(t)  \\& \q-\big[R(t,t)  \! +  \! D(t) ^\top
\big(P_1(t;t)+ P_3(t;t)    \!  + \! P_2(t)^\top N(t,t)P_2(t) \big)D(t)\big]^{\dagger} \\& \q\times \big[R(t,t) \!+ \! D(t) ^\top
\big(P_1(t;t)  \! +  \! P_3(t;t)  \!+ \! P_2(t)^\top  \! N(t,t)P_2(t) \big)D(t)\big]\th_0(t).
\end{aligned}
\end{equation}

Our first main result is as follows.
\begin{theorem}\label{s4.1-cor1}
Let Assumption \ref{s4.1-H1} hold. Then   Problem (TI-FBSLQ)  admits a closed-loop equilibrium strategy if and only if    there exists  $\theta_0 (\cd) \in L^2(0,T;\dbR^{k\times n})$ such that the   equation \eqref{s4.1-cor1-eq1} admits a solution $(P_1(\cd;\cd),P_2(\cd),P_3(\cd;\cd))$ satisfying
\begin{equation}\label{s4.1-cor1-eq3}
\begin{array}{ll} \ds
\big[R(\cd,\cd)+D(\cd) ^\top
\big(P_1(\cd;\cd)+ P_3(\cd;\cd)+P_2(\cd)^\top N(\cd,\cd)P_2(\cd) \big)D(\cd)\big]^{\dagger}\\ \ns\ds \times \big\{\big[B(\cd)^\top\big( P_1(\cd;\cd)+P_3(\cd;\cd)\big)+D(\cd)^\top
\big(P_1(\cd;\cd) +P_3(\cd;\cd)+P_2(\cd)^\top N(\cd,\cd)P_2(\cd) \big)C(\cd)\big]    \\\ns\ds\, +\big(\widehat{B}(\cd)^\top+ {B}(\cd)^\top P_{2}(\cd)^\top + D^\top (\cd) P_{2}(\cd)^\top  \widehat{D}(\cd)^\top\big ) G_2(\cd)P_{2}(\cd) \big\} \,\in \,  L^2(0,T;\dbR^{k\times n}),
\end{array}
\end{equation}
\begin{equation}\label{s4.1-cor1-eq4}
\begin{array}{ll} \ds
\mathcal{R}\Big( R(t,t)+D(t)^\top
	\big(P_1(t;t)+ P_3(t;t)+P_2(t)^\top N(t,t)P_2(t) \big)D(t) \Big)  \supseteq\\ \ns \ds  \mathcal{R}\Big( B(t)^\top\big( P_1(t;t)+P_3(t;t)\big)+D(t)^\top
	\big(P_1(t;t) +P_3(t;t)+P_2(t) ^\top N(t,t)P_2(t) \big)C(t)   \\ \q \,\,\, +\big(\widehat{B}(t)^\top+ {B}(t)^\top P_{2}(t)^\top + D (t)^\top P_{2}(t) ^\top \widehat{D}(t)^\top\big ) G_2(t)P_{2}(t) \Big), \q \ae  t\in[0,T],
\end{array}
\end{equation}
and 
\begin{equation}\label{s4.1-cor1-eq5}
	\begin{array}{ll}
		R(t,t)\! + \!  D(t)^\top P_1(t;t) D(t)   +   D(t)^\top P_3(t;t) D(t)   + D(t)^\top P_2(t)^\top N(t,t)	P_2(t)  D(t)     \geq 0,  \q \ae  t\in[0,T].
	\end{array}
\end{equation}
In this case,  $\Theta(\cd)$ given by \eqref{s4.1-cor1-eq2} is a closed-loop equilibrium strategy.
\end{theorem}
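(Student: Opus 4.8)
The plan is to prove both implications at once by reducing the equilibrium inequality of Definition \ref{s4.1-de1} to a pointwise algebraic condition extracted from a sharp first-order expansion of the cost, following the spike-variation philosophy of \cite{Hu-2012,Hu-2017} adapted to the forward-backward setting. Fix $t\in[0,T)$, $x=\overline X(t)\in L^2_{\cF_t}(\Om;\dbR^n)$ and $v\in L^2_{\cF_t}(\Om;\dbR^k)$, let $(\overline X,\overline Y,\overline Z)$ be the equilibrium trajectory under $\Theta(\cd)$, and let $(X^{\e_j},Y^{\e_j},Z^{\e_j})$ be the state under $u^{\e_j}$. Since $u^{\e_j}=\chi_{[t,t+\e_j]}v+\Theta X^{\e_j}$ retains the feedback form, the $\Theta$-terms cancel in the equation for $\delta X^{\e_j}:=X^{\e_j}-\overline X$, which solves
\begin{equation*}
d(\delta X^{\e_j})=\big(A_\Theta\,\delta X^{\e_j}+\chi_{[t,t+\e_j]}Bv\big)ds+\big(C_\Theta\,\delta X^{\e_j}+\chi_{[t,t+\e_j]}Dv\big)dW,\qquad \delta X^{\e_j}(t)=0,
\end{equation*}
while $(\delta Y^{\e_j},\delta Z^{\e_j})$ solves the corresponding linear BSDE, coupled to $\delta X^{\e_j}$ through $\widehat A_\Theta$, with spike forcing $\chi_{[t,t+\e_j]}\widehat Bv$ and terminal value $H\,\delta X^{\e_j}(T)$. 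Because the perturbation enters the diffusion through $Dv$, the deviation is of order $\sqrt{\e_j}$; the first quantitative step is to establish $\mE_t\max_s|\delta X^{\e_j}(s)|^2=O(\e_j)$ and the refined expansion isolating the $O(\e_j)$ part of $\cJ(t,\overline X(t);u^{\e_j})-\cJ(t,\overline X(t);\bar u)$, which forces us to keep the quadratic cost terms generated by the $\sqrt{\e_j}$-sized deviation while showing that the genuine $\sqrt{\e_j}$-order cross terms vanish in conditional expectation over the shrinking interval.

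The second step is to evaluate this $O(\e_j)$ part in closed form through the Riccati matrices. I would first record the decoupling relation along the equilibrium path: substituting $\overline Y(s)=P_2(s)\overline X(s)$ into the backward equation and matching the $ds$ and $dW$ parts via It\^o's formula yields $\overline Z(s)=P_2(s)C_\Theta(s)\overline X(s)$ together with exactly the linear ODE and terminal condition $P_2(T)=H$ of \eqref{s4.1-cor1-eq1}. Feeding this in, the penalties $\langle M\overline Y,\overline Y\rangle$ and $\langle N\overline Z,\overline Z\rangle$ become $\langle P_2^\top M P_2\,\overline X,\overline X\rangle$ and $\langle C_\Theta^\top P_2^\top N P_2 C_\Theta\,\overline X,\overline X\rangle$, which are precisely the source terms of the $P_3$-equation (with $P_3(T;t)=0$), while $\langle Q(\cd,t)\overline X,\overline X\rangle+\langle\Theta^\top R(\cd,t)\Theta\,\overline X,\overline X\rangle$ is the source of the $P_1$-equation (with $P_1(T;t)=G_1(t)$). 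Applying It\^o's formula to $\langle(P_1+P_3)\delta X,\delta X\rangle$ and to the bilinear adjoint functionals capturing the cross terms, and using these Riccati identities to cancel the time integrals, collapses the contribution to $[t,t+\e_j]$; the nonlocal penalty $\langle G_2\overline Y(t),\overline Y(t)\rangle=\langle G_2 P_2(t)\overline X(t),P_2(t)\overline X(t)\rangle$ enters only at first order through the sensitivity of $\delta Y^{\e_j}(t)$ to the spike, producing the term $(\widehat B^\top+B^\top P_2^\top+D^\top P_2^\top\widehat D^\top)G_2 P_2$. Using Assumption \ref{s4.1-H1} to make the diagonal values $P_1(t;t),P_3(t;t)$ well defined and regular, I expect to obtain, $\dbP$-a.s.,
\begin{equation*}
\lim_{j\to\infty}\frac{\cJ(t,\overline X(t);u^{\e_j})-\cJ(t,\overline X(t);\bar u)}{\e_j}=\big\langle\Lambda(t)v,v\big\rangle+2\big\langle(\rho(t)+\Lambda(t)\Theta(t))\overline X(t),v\big\rangle,
\end{equation*}
where $\Lambda(t)$ is the symmetric matrix on the left-hand side of \eqref{s4.1-cor1-eq5} and $\rho(t)$ is the bracketed matrix inside $\mathcal R(\cdot)$ on the right-hand side of \eqref{s4.1-cor1-eq4}.

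With this formula the remaining argument is purely algebraic: a quadratic form $\langle\Lambda(t)v,v\rangle+2\langle b,v\rangle$ is nonnegative for every $v\in L^2_{\cF_t}$ if and only if $\Lambda(t)\ge0$ and $b=0$. Since here $b=(\rho(t)+\Lambda(t)\Theta(t))\overline X(t)$ and $x=\overline X(t)$ ranges over all of $L^2_{\cF_t}(\Om;\dbR^n)$, the equilibrium condition is equivalent to $\Lambda(t)\ge0$ and $\Lambda(t)\Theta(t)=-\rho(t)$ for a.e.\ $t$. For necessity this yields \eqref{s4.1-cor1-eq5}, while solvability of $\Lambda(t)\Theta(t)=-\rho(t)$ forces the range inclusion \eqref{s4.1-cor1-eq4}; its general solution is $\Theta=-\Lambda^\dagger\rho+(I-\Lambda^\dagger\Lambda)\theta_0$ for an arbitrary $\theta_0(\cd)$ (for necessity one may simply take $\theta_0=\Theta$), which is exactly \eqref{s4.1-cor1-eq2}, and the requirement $\Theta\in L^2(0,T;\dbR^{k\times n})$ (so that $A_\Theta,C_\Theta,\widehat A_\Theta$ are admissible and \eqref{s4.1-cor1-eq1} is well posed) reduces to \eqref{s4.1-cor1-eq3}. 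For sufficiency I reverse the reasoning: given $\theta_0$ and a solution $(P_1,P_2,P_3)$ of \eqref{s4.1-cor1-eq1} obeying \eqref{s4.1-cor1-eq3}--\eqref{s4.1-cor1-eq5}, the $\Theta$ of \eqref{s4.1-cor1-eq2} satisfies $\Lambda\Theta=-\Lambda\Lambda^\dagger\rho=-\rho$ by \eqref{s4.1-cor1-eq4} and the identity $\Lambda\Lambda^\dagger\Lambda=\Lambda$, so the linear term vanishes and the limit equals $\langle\Lambda(t)v,v\rangle\ge0$ by \eqref{s4.1-cor1-eq5}; hence $\varliminf_{j}\ge0$ and $\Theta$ is a closed-loop equilibrium strategy.

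The main obstacle will be the sharp cost expansion of the first two steps rather than the final algebra. Because the control acts in the diffusion, the naive first-order variational equation is insufficient: the $O(\sqrt{\e_j})$ state deviation contributes to the cost at order $\e_j$ through the quadratic weights, so one must expand to second order and verify that every $\sqrt{\e_j}$-order cross term is a martingale increment averaging out under $\mE_t$. The forward-backward coupling compounds this, since the backward deviation $(\delta Y^{\e_j},\delta Z^{\e_j})$ and especially the nonlocal initial-time penalty $\langle G_2\overline Y(t),\overline Y(t)\rangle$ must be differentiated through the BSDE to extract their $v$-linear contribution; establishing the decoupling $\overline Y=P_2\overline X$ and the correct source terms of the $P_3$-equation, together with the uniform-in-$t$ estimates justifying passage to the diagonal $s=t$ under Assumption \ref{s4.1-H1}, is where the technical weight of the proof lies.
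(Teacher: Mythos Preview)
Your outline is correct and follows essentially the same route as the paper: spike variation under the feedback $u^{\e_j}=\chi_{[t,t+\e_j]}v+\Theta X^{\e_j}$, decoupling of the backward state via $\overline Y=P_2\overline X$, $\overline Z=P_2C_\Theta\overline X$, reduction of the cost difference to the quadratic form $\tfrac12\langle\Lambda(t)v,v\rangle+\langle(\rho(t)+\Lambda(t)\Theta(t))x,v\rangle$ (your factor of~2 is harmless), and the Moore--Penrose algebra $\Lambda\Theta=-\rho$ giving \eqref{s4.1-cor1-eq2}--\eqref{s4.1-cor1-eq5}. The one place where the paper's execution is considerably more elaborate than your sketch is the treatment of the backward perturbation and the nonlocal $G_2$-term: the paper introduces a cascade of fourteen auxiliary deterministic and stochastic equations $\cP_1,\dots,\cP_{14}$ to decouple not only $(\delta Y^{\e_j},\delta Z^{\e_j})$ in terms of $\delta X^{\e_j}$ but also the adjoint variables generated by the $M,N,G_2$ penalties; in particular, because $G_2$ weights $Y$ at the \emph{initial} time, the cross term $\langle G_2\overline Y(t),\delta Y^{\e_j}(t)\rangle$ is handled via a \emph{forward} adjoint SDE (the paper's $Y_5$, with $Y_5(t;t)=G_2(t)\overline Y(t)$) rather than a BSDE, and its further decoupling is what produces the factor $(\widehat B^\top+B^\top P_2^\top+D^\top P_2^\top\widehat D^\top)G_2P_2$ you wrote down. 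Your phrase ``differentiated through the BSDE'' and the single It\^o computation on $\langle(P_1+P_3)\delta X,\delta X\rangle$ understate this; you will also need the analogous decoupling of $(\delta Y^{\e_j},\delta Z^{\e_j})$ (the paper's $\cP_6,\cP_7$) to see that $\langle N\,\delta Z^{\e_j},\delta Z^{\e_j}\rangle$ contributes the missing piece $D^\top P_2^\top N P_2 D$ of $\Lambda(t)$.
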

\begin{remark}
If
\begin{equation}\label{rm2.3-eq1}
H=0,\; \widehat{A}(\cd)=\widehat{C}(\cd)=0,\;\widehat{B}(\cd)=\widehat{D}(\cd)=0,\; M(\cd,\cd)=N(\cd,\cd)=G_2(\cd)=0,
\end{equation} 
then the Problem (TI-FBSLQ) can be reduced to the Problem (TISLQ) that was studied in the reference \cite{Lu-2023}. The equations \eqref{s4.1-cor1-eq1}--\eqref{s4.1-cor1-eq2} mentioned in the problem are consistent with the equations derived in \cite[Theorem 2.10]{Lu-2023}. It is worth noting that in \cite{Lu-2023}, the equations were obtained using the approach of multiperson differential games. However, in our case, we use a different approach, namely, the variational approach, to obtain the equations \eqref{s4.1-cor1-eq1}--\eqref{s4.1-cor1-eq2}.
\end{remark}
\begin{remark}
We will refer to equations \eqref{s4.1-cor1-eq1}--\eqref{s4.1-cor1-eq5} as the ``generalized equilibrium Riccati equation" for Problem (TI-FBSLQ). This term is used because when condition \eqref{rm2.3-eq1} is satisfied and the time-preference disappears (meaning that the coefficients $Q(\cd,t)$, $R(\cd,t)$, $G_1(t)$  do not depend on the initial time $t$), our problem can be naturally reduced to a classical indefinite stochastic LQ problem. In this case, equations \eqref{s4.1-cor1-eq1}--\eqref{s4.1-cor1-eq5} become equivalent to the classical generalized Riccati equation as discussed in references \cite{Chen-1998,Rami-2001,Sun-Yong-2020}.
\end{remark}

In Theorem \ref{s4.1-cor1}, the solvability of equations \eqref{s4.1-cor1-eq1}--\eqref{s4.1-cor1-eq3} is dependent on the choice of $\theta_0$. This may appear to be counterintuitive. However, the following example demonstrates that such dependence is indeed necessary in Theorem \ref{s4.1-cor1}.
\begin{example}
Let  \eqref{rm2.3-eq1} hold, 
$n=k=1$, and
\begin{equation*}
\begin{cases}
A(\cd)=D(\cd)=0,\, B(\cd)=C(\cd)=1,\,R(s,t)=s-t,\\\ns\ds
Q(s,t)\geq 0, \q G_1(t)=-\int_{t}^{T}e^{-(T-\tau)}Q(\tau,t)d\tau.
\end{cases}
\end{equation*}

When we choose $\theta_0(\cd)=0$, the equations \eqref{s4.1-cor1-eq1}--\eqref{s4.1-cor1-eq5} are solvable. In this case,  we can get $P_6(\cd)=0$, $P_8(\cd;\cd)=0$, and
\begin{equation*}
\begin{cases}
\ds \frac{dP_1(s;t)}{ds}	=-P_1(s;t)-Q(s,t), & t\leq s\leq T,\\
\ns\ds	P_1(T;t)=G_1(t), & 0\leq t\leq T.
\end{cases}
\end{equation*}
Therefore $P_1(t;t)=0$, and the  constraint  conditions \eqref{s4.1-cor1-eq3}--\eqref{s4.1-cor1-eq5} hold. By Theorem \ref{s4.1-cor1}, the closed-loop equilibrium strategies exist.

When we choose $\theta_0(\cd)=-\frac{1}{2}$, the equations \eqref{s4.1-cor1-eq1}--\eqref{s4.1-cor1-eq2}  are  solvable, but the  constraint  conditions \eqref{s4.1-cor1-eq4} fail. Indeed, in this case,  we can get $P_6(\cd)=0,P_8(\cd;\cd)=0,$ and
\begin{equation*}
\begin{cases}\ns\ds
 \frac{ dP_1(s;t)}{ds}=-Q(s,t)-\frac{s-t}{4} , & t\leq s\leq T,\\\ns\ds 
P_1(T;t)=G_1(t), & 0\leq t\leq T.
\end{cases}
\end{equation*}
Hence
\begin{equation*}
\begin{aligned}
P_1(t;t)&=G_1(t)-\int_{t}^{T}\big[-Q(\tau,t)-(\tau-t)/4\big]d\tau\\
&=-\int_{t}^{T}e^{-(T-\tau)}Q(\tau,t)d\tau+\int_{t}^{T}\big[Q(\tau,t)+(\tau-t)/4\big]d\tau\\
&=\int_{t}^{T}\big(1-e^{-(T-\tau)} \big)Q(\tau,t)d\tau +(T-t)^2/8>0.
\end{aligned}
\end{equation*}
We can see that
\begin{equation*}
\begin{aligned}
\mathcal{R}\left( R(t,t)+D(t)^\top P_1(t;t)D(t) \right)\equiv	\mathcal{R}\left( 0 \right) \not \supseteq \mathcal{R}\left( B(t)^\top P_1(t;t)+D(t) ^\top P_1(t;t)C(t) \right).
\end{aligned}
\end{equation*}
Consequently, \eqref{s4.1-cor1-eq4} does not hold.
\end{example}

This example demonstrates that, in general, for Problem (TI-FBSLQ), the solvability of the equations \eqref{s4.1-cor1-eq1}--\eqref{s4.1-cor1-eq5} is dependent on the choice of the parameter $\th_0(\cd)$, even if its closed-loop equilibrium strategy exists.

Theorem \ref{s4.1-cor1} establishes the connection between the existence of closed-loop equilibrium strategies and the solvability of the system \eqref{s4.1-cor1-eq1}--\eqref{s4.1-cor1-eq5}. It is natural to inquire about the conditions under which the system \eqref{s4.1-cor1-eq1}--\eqref{s4.1-cor1-eq5} is solvable. Unfortunately, as of now, we only have the answer to this question for the one-dimensional case, which is the second main result of this paper. To present it, we first make the following assumption:
\begin{assumption}\label{s4.1-H3}
Let $m=n=k=1$.  
There exists a constant $\delta>0$, such that
$$
\begin{cases}\ds
R(t,t)\geq \delta, \qq  t\in [0,T], \\
\ns\ds N(t,t)\geq \delta,\qq  t\in [0,T],  \\ \ns\ds D(t)^\top D(t)\geq \delta,\q \ae t\in [0,T]. 
\end{cases}
$$
Moreover,
$$\begin{cases}\ds	
Q(s,t)\geq 0, \\
\ns\ds  M(s,t)\geq 0,  \\
\ns\ds  G_1(t)\geq 0,  
\end{cases}\q 0\leq t\leq s\leq T.
$$
\end{assumption}	
The  second main result of this paper is as follows:
\begin{theorem}\label{th2}
Let Assumptions \ref{s4.1-H1}--\ref{s4.1-H3} hold.  Then for any  $\theta_0 (\cd) \in L^2(0,T;\dbR)$, the  system  \eqref{s4.1-cor1-eq1}--\eqref{s4.1-cor1-eq5}  admits a unique solution $(P_1(\cd;\cd),P_2(\cd),P_3(\cd;\cd))$. Consequently,  Problem (TI-FBSLQ) admits a unique equilibrium strategy given by 
\begin{equation}\label{th2-eq1}
\begin{aligned}
\Theta(t)&=\! -\big[R(t,t)+D(t)^\top
\big(P_1(t;t)  \! +  \! P_3(t;t)  \! +  \! P_2(t)^\top N(t,t)P_2(t) \big)D(t)\big]^{\dagger} \\&\q \times \big[ B(t)^\top\big( P_1(t;t)  \! +  \! P_3(t;t)\big)  \!+\! D(t)^\top \!
\big(P_1(t;t) \!+\! P_3(t;t)\! +\! P_2(t) ^\top\! N(t,t)P_2(t) \big)\\&\qq \times C(t)     +\! \big(\widehat{B}(t)^\top \!\!   + \!\!{B}(t)^\top\! P_{2}(t)^\top \!\! + \!\! D (t)^\top \! P_{2}(t)^\top  \widehat{D}(t) ^\top \big )   G_2(t)P_{2}(t) \big].
\end{aligned}
\end{equation}	
\end{theorem}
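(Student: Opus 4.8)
The plan is to exploit the scalar structure ($m=n=k=1$) to collapse the coupled system \eqref{s4.1-cor1-eq1}--\eqref{s4.1-cor1-eq5} into a fixed-point problem for the three \emph{diagonal} functions $\Lambda_1(s):=P_1(s;s)$, $P_2(s)$, $\Lambda_3(s):=P_3(s;s)$, and to solve that problem by a contraction-plus-continuation argument run backward from $s=T$. The first observation is that the feedback gain \eqref{s4.1-cor1-eq2}, evaluated along the diagonal, depends only on $\Lambda_1(s),P_2(s),\Lambda_3(s)$. Under Assumption \ref{s4.1-H3} any solution must satisfy $\Lambda_1,\Lambda_3\ge0$: writing the first and third scalar equations of \eqref{s4.1-cor1-eq1} as $\dot P_1+aP_1+(Q+R\Theta^2)=0$ and $\dot P_3+aP_3+P_2^2(M+C_\Theta^2N)=0$ with $a:=2A_\Theta+C_\Theta^2$, the source terms and the terminal data $G_1(t),0$ are nonnegative (since $Q,M,G_1\ge0$ and $R\ge\delta>0$), so variation of constants forces $P_1,P_3\ge0$. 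Consequently the bracket inverted in \eqref{s4.1-cor1-eq2} equals $R(t,t)+D(t)^2(\Lambda_1(t)+\Lambda_3(t)+P_2(t)^2N(t,t))\ge R(t,t)\ge\delta>0$, so the Moore--Penrose inverse is a genuine reciprocal, the $\theta_0$-terms in \eqref{s4.1-cor1-eq2} cancel (explaining the independence of $\theta_0$ asserted in the theorem), and the constraints \eqref{s4.1-cor1-eq4}--\eqref{s4.1-cor1-eq5} hold automatically, leaving only the $L^2$-bound \eqref{s4.1-cor1-eq3} on $\Theta$ to verify.

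Next I would set up the fixed point. Given a candidate triple $(\Lambda_1,P_2,\Lambda_3)$ in the closed cone $\{\Lambda_1,\Lambda_3\ge0\}\subset C([0,T];\dbR^3)$, formula \eqref{s4.1-cor1-eq2} defines $\Theta(\cd)$, hence $A_\Theta,C_\Theta,\widehat A_\Theta$ and the scalar coefficients $a$ and $b:=A_\Theta+\widehat C+\widehat D C_\Theta$. Solving the now-linear backward ODEs \eqref{s4.1-cor1-eq1} by variation of constants and reading off the diagonal produces a new triple via
\begin{align*}
\Lambda_1^{\mathrm{new}}(s)&=e^{\int_s^T a}G_1(s)+\int_s^T e^{\int_s^\sigma a}\big(Q(\sigma,s)+R(\sigma,s)\Theta(\sigma)^2\big)d\sigma,\\
P_2^{\mathrm{new}}(s)&=e^{\int_s^T b}H+\int_s^T e^{\int_s^\sigma b}\widehat A_\Theta(\sigma)\,d\sigma,\\
\Lambda_3^{\mathrm{new}}(s)&=\int_s^T e^{\int_s^\sigma a}P_2(\sigma)^2\big(M(\sigma,s)+C_\Theta(\sigma)^2N(\sigma,s)\big)d\sigma.
\end{align*}
The resulting map $\Phi$ preserves the cone (all integrands and terminal data are nonnegative) and, on any region where the denominator is bounded below, is Lipschitz in $(\Lambda_1,P_2,\Lambda_3)$. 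Because each diagonal value at $s$ depends only on the data on $[s,T]$, $\Phi$ is a contraction on short intervals $[T-h,T]$ for $h$ small, giving local existence and uniqueness; the backward causal structure then lets me build the solution interval by interval.

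The decisive step is the a priori $L^\infty$ estimate that keeps the step size $h$ from shrinking and yields global solvability on $[0,T]$. Here the point is that the feedback coefficient \emph{saturates}: using $\Lambda_1,\Lambda_3\ge0$ together with $R,N\ge\delta$ and $D^2\ge\delta$, one checks that
\begin{equation*}
C_\Theta=\frac{C R(t,t)-DB(\Lambda_1+\Lambda_3)-D\big(\widehat B+BP_2+D\widehat D P_2\big)G_2P_2}{R(t,t)+D^2\big(\Lambda_1+\Lambda_3+P_2^2N(t,t)\big)}
\end{equation*}
is bounded by a constant depending only on the $L^\infty$-norms of the data and on $\delta$, \emph{uniformly} in the sizes of $\Lambda_1,\Lambda_3,P_2$ (each potentially large numerator term is dominated by the corresponding denominator term). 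Hence $\Theta=(C_\Theta-C)/D$, and then $a,b,\widehat A_\Theta$, are uniformly bounded; feeding this into the three integral formulas bounds $P_2,\Lambda_1,\Lambda_3$ by constants depending only on the data, $\delta$ and $T$. These bounds preclude blow-up, so a uniform $h$ works, finitely many backward steps cover $[0,T]$, and the contraction estimates give uniqueness of the diagonal triple and hence of $\Theta$. Finally, with $\Theta$ fixed I solve the linear backward ODEs \eqref{s4.1-cor1-eq1} for $P_1(\cd;t),P_3(\cd;t)$ at each $t$ to recover the full solution; $\Theta\in L^\infty\subset L^2$ confirms \eqref{s4.1-cor1-eq3}, and Theorem \ref{s4.1-cor1} then yields \eqref{th2-eq1} as a closed-loop equilibrium strategy, its uniqueness following from that of the system's solution. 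I expect this saturation estimate to be the main obstacle, since it is precisely what turns the merely local, nonlinearly coupled fixed point into a global one.
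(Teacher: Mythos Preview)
Your proposal is correct and follows essentially the same strategy as the paper: the saturation estimate showing $\Theta$ is uniformly bounded once $\Lambda_1,\Lambda_3\ge 0$, followed by a Banach fixed-point argument on short backward intervals and continuation via the resulting a priori bound. The paper differs only in packaging---it first passes through an integral-equation reformulation (its Proposition~\ref{s4.1-cor2}) involving the stochastic fundamental solution $\Phi(s,t)$ and the combined quantity $\wt P_1(t)=P_1(t;t)+P_3(t;t)$, and runs the contraction on $\Theta\in L^\infty$ rather than on your triple $(\Lambda_1,P_2,\Lambda_3)$---but in the scalar case $\mE\,\Phi(s,t)^2=e^{\int_t^s a}$, so your direct ODE formulas and the paper's expectation formulas coincide.
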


\begin{remark}
In fact, under  Assumptions \ref{s4.1-H1}--\ref{s4.1-H3},  we would have
\begin{align*}
&	\big[R(t,t)  \! +  \! D(t) ^\top
\big(P_1(t;t)+ P_3(t;t)    + \! P_2(t)^\top N(t,t)P_2(t) \big)D(t)\big]^{\dagger} \\ &\times \big[R(t,t) \!+ \! D(t) ^\top
\big(P_1(t;t)  \! +  \! P_3(t;t)  \!+ \! P_2(t)^\top  \! N(t,t)P_2(t) \big)D(t)\big]=I.
\end{align*}
Then the parameter $\th_0(\cd)$ in \eqref{s4.1-cor1-eq2} naturally vanishes and the generalized equilibrium Riccati equation \eqref{s4.1-cor1-eq1}--\eqref{s4.1-cor1-eq5} reduces to an equilibrium Riccati equation \eqref{s4.1-cor1-eq1}--\eqref{s4.1-cor1-eq2} without parameter $\th_0(\cd)$.
\end{remark}

Since the seminal work \cite{Peng-1993} on optimal control problems for forward-backward stochastic differential equations (FBSDEs), this area of research has received significant attention. Numerous studies have been conducted, although it is impossible to provide an exhaustive list. Interested readers can refer to \cite{Hu-2018,Ma-Yong-1999,Wang-Wu-Xiong-2013,Yong-2010}, as well as the references cited therein.

It is worth noting that in the aforementioned literature, the essential time-inconsistency of optimal control problems for FBSDEs has been ignored, with the focus being on pre-committed optimal control. However, a recent study \cite{H.Wang-2022} revealed that time-inconsistency is indeed present in these problems. This implies that the pre-committed optimal control obtained at the initial time may not remain optimal at a later time. 

The study of time-inconsistent problems can be traced back to the mid-18th century, with works by Hume \cite{Hume-1739} and Smith \cite{Smith-1759} analyzing the time-inconsistent behavior of animals and humans. Strotz \cite{Strotz-1955} was the first to mathematically formulate a time-inconsistent problem within an economic context. Since then, time-inconsistent problems have been widely studied in economics and finance (e.g., \cite{Basak-2008,Ekeland-Pirvu-2008,Goldman-1980,Pelege-Yaari-1973,Pollak}), focusing mainly on discrete dynamic systems, simple ordinary differential equations (ODEs), or stochastic differential equations (SDEs). Inspired by these studies, researchers in control theory started investigating time-inconsistent optimal control problems. Substantial literature has been published on this subject, including \cite{Dou-2020,Hu-2012,Hu-2017,Lu-2023,Ma-2023,H.Wang-2022,Yong-2014,Yong-2017}, as well as the references cited therein, for time-inconsistent linear-quadratic (LQ) problems for SDEs. Among these works, there are two main methods employed. The first one is based on \textit{multiperson differential games}. By this method, Yong \cite{Yong-2017} obtained a unique  closed-loop equilibrium strategy for time-inconsistent stochastic LQ problems under \textit{standard conditions} (namely, the control cost weighting matrix $R$ is uniformly positive definite, and the other weighting coefficients $Q,G_1$ are positive semidefinite). The second one is based on variational approaches  and decoupling techniques. Hu et al. \cite{Hu-2012,Hu-2017} applied this method to  obtain a unique equilibrium control and provided an equivalent characterization of that control for time-inconsistent stochastic LQ problems under \textit{standard conditions} and a singular case where they still require $R+D^\top MD$  to be uniformly positive definite (Here  $M$ satisfies a certain Riccati equation).   In these works, the proof of the existence of  closed-loop equilibrium strategies or equilibrium controls essentially requires $R+D^\top P_1D$ or $R+D^\top MD$ to be uniformly positive definite. Recently, through multiperson differential games and the establishment of a sharp estimate, L\"u and Ma \cite{Lu-2023} introduced  conditions weaker than prior researches, where $R+D^\top P_1D$ can be singular, to guarantee the existence of closed-loop equilibrium strategies for time-inconsistent stochastic LQ problems.  It should be noted that in the aforementioned works, the control systems are SDEs, and the time-inconsistency arises from \textit{ time-preferences} or \textit{risk-preferences}.  Very recently, Wang et al. \cite{H.Wang-2022} discovered that  the forward-backward structure of the controlled system can also lead to time-inconsistency. They introduced a general framework for time-inconsistent optimal control problems for FBSDEs, and proved the well-posedness of the \textit{ equilibrium Hamilton-Jacobi-Bellman equation} when the diffusion
coefficient does not contain the control variable. They also posed a time-inconsistent FBSDE linear-quadratic (TI-FBSLQ) problem but did not provide a proof for the well-posedness of the equilibrium Riccati equation.

Despite the rich literature on optimal control theory for FBSDEs and time-inconsistent optimal control theory for SDEs, there has been limited research on time-inconsistent optimal control theory for FBSDEs, with the exception of the aforementioned study in \cite{H.Wang-2022}. 

The main contributions of the current paper are as follows:

\begin{enumerate}
\item We obtained the generalized equilibrium Riccati equation for Problem (TI-FBSLQ), which is a coupled equilibrium Riccati equation system with parameters and constraint conditions. The solvability of this equation fully characterizes the existence of closed-loop equilibrium strategies for Problem (TI-FBSLQ). We also provide an equivalent characterization of the closed-loop equilibrium strategy. To obtain the results,  the main difficulties lie in three aspects:  First,  the cost weighting matrices for the state and the control are allowed  to be indefinite. In \cite{Lu-2023}, L\"u and Ma  utilized multiperson differential games to relax the standard conditions to some  extent,   and derived a family of equilibrium Riccati equations depending on parameters  to obtain   closed-loop  equilibrium strategies.  However, the method  in \cite{Lu-2023} is not suitable for handling  the more general indefinite case. To address the indefinite case,  we adopt a different approach,  the variational approach. Second, the estimate for the martingale term $Z$ in the BSDE is generally not enough for us to handle the problem. To overcome this difficulty, we notice that  the BSDE   is actually always coupled with a nice SDE, which may imply that  $Z$ have some good properties. Hence we  technically  use decoupling techniques multiple times  to dig the hidden information of $Z$. Eventually, we   introduce  fourteen auxiliary equations $\{\cP_{i}\}_{i=1}^{14}$ for decoupling. Third, the time-preferences, namely, $Q(\cd,t),R(\cd,t),M(\cd,t),N(\cd,t),G_1(t),G_2(t)$ depending on initial time $t$,  pose another obstacle to obtaining the result.  We overcome it by establishing  stability estimates for some auxiliary  equations generated during the derivation and subsequently obtaining  some convergence results.

\item Our study reveals a new and interesting phenomenon. For Problem (TI-FBSLQ), even if closed-loop equilibrium strategies exist, the solvability of the generalized equilibrium Riccati equation depends on the choice of the parameter. This is different from the time-consistent scenario, where the existence of closed-loop optimal control implies the solvability of the generalized Riccati equation.

\item When the state is one-dimensional and under slightly stronger assumptions than standard conditions, we prove the well-posedness of the generalized equilibrium Riccati equation for any chosen parameter. From this result, we can explicitly obtain a closed-loop equilibrium strategy for Problem (TI-FBSLQ). The main difficulty lies in the complicated nature of the coupled differential equation system with non-local terms and constraint conditions. We introduce assumptions to eliminate the constraint conditions and simplify the problem into a system of differential equations with non-local terms. By appropriately applying the Banach fixed-point theorem piecewise, we establish the well-posedness on the entire interval.
\end{enumerate}

The rest of this paper is organized as follows.  Section \ref{sec-proof-pri} is devoted to the proof of Theorem \ref{s4.1-cor1} and Section \ref{sec-proof-main}  addresses the proof of Theorem \ref{th2}.

\section{Proof of Theorem \ref{s4.1-cor1}}\label{sec-proof-pri}

The purpose of this section is to provide a proof for Theorem \ref{s4.1-cor1}. To achieve this, it is crucial to demonstrate the following equivalent characterization of closed-loop equilibrium strategies for Problem (TI-FBSLQ).
\begin{theorem}\label{s4.1-th1}
Let Assumption \ref{s4.1-H1} hold. Then $\Theta(\cd)\in L^2(0,T;\mathbb{R}^{k\times n})$ is a  closed-loop equilibrium strategy of Problem  (TI-FBSLQ) if and only if for  $\ae t\in[0,T],$
\begin{equation}\label{s4.1-th1-eq1}
	\begin{array}{ll}
		R(t,t) +  D(t)^\top {\bf P_1}(t;t) D(t)  +  D(t)^\top {\bf P_3}(t;t) D(t)     +  D(t)^\top {\bf P_2}(t)^\top N(t,t)	{\bf P_2}(t)  D(t)  \geq 0,
	\end{array}
\end{equation}
and
\begin{equation}\label{s4.1-th1-eq2}
\begin{aligned}
&B(t)^\top {\bf P_1}(t;t)+ D (t)^\top {\bf P_1}(t;t)C_\Theta(t)  + {B}(t)^\top {\bf P_3}(t;t)   +  D(t)^\top {\bf P_3}(t;t)C_\Theta(t) \\&+R(t,t)\Theta(t)  +\big(\widehat{B}(t)^\top+ {B}(t)^\top {\bf P_1}(t)^\top + D (t)^\top {\bf P_1}(t)^\top  \widehat{D}(t)^\top\big ) G_2(t){\bf P_1}(t) \\&+   D(t)^\top  {\bf P_1}(t)^\top N(t,t){\bf P_1}(t)C_\Theta(t)    =0,
\end{aligned}
\end{equation}
where $({\bf P_1}(\cd;\cd),{\bf P_2}(\cd),{\bf P_3}(\cd;\cd))$ satisfies the following equations:
\ss
\begin{equation}\label{s4.1-th1-eq3}
	\!\!	\begin{cases}\ds
		\frac{d{\bf P_1}(s;t)}{ds}\!+\! {\bf P_1}(s;t) A_\Theta \! +\! A_\Theta^\top  {\bf P_1}(s;t) \!  \\ \ns\ds \q+ C_\Theta^\top  {\bf P_1}(s;t)C_\Theta +Q(s,t) \!+\!\Th^\top R(s,t)\Th=\!0, & 0\leq t\leq s\leq T,\\
		\ns\ds \frac{d{\bf P_2}}{ds}+ {\bf P_2}A_\Theta+  \widehat{A}_\Theta  +\widehat{C}{\bf P_2} +\widehat{D} {\bf P_2}C_\Theta =0, & 0\leq t\leq s\leq T,\\
		\ns\ds \frac{ d{\bf P_3}(s;t)}{ds} + {\bf P_3}(s;t)A_\Theta 	+A_\Theta ^\top {\bf P_3}(s;t)   +C_\Theta ^\top {\bf P_3}(s;t)C_\Theta \\ \ns\ds \q    +{\bf P_2}^\top M(s,t){\bf P_2}  + C_\Theta ^\top {\bf P_2}^\top N(s,t) 	{\bf P_2} C_\Theta  =0, &  0\leq t\leq s\leq T,\\
		\ns\ds {\bf P_1}(T;t)=G_1(t),	\q {\bf P_2}(T)=H, \q 	{\bf P_3}(T;t)=0, & 0\leq t\leq T.
	\end{cases}
\end{equation}
\end{theorem}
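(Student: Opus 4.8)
The plan is to carry out a spike-variational expansion of $\cJ$ around the equilibrium pair and to read \eqref{s4.1-th1-eq1}--\eqref{s4.1-th1-eq2} off the coefficients of $\e_j$: \eqref{s4.1-th1-eq2} as the vanishing of the first-order term and \eqref{s4.1-th1-eq1} as the nonnegativity of the second-order term. First I would introduce the variations $\delta X:=X^{\e_j}-\overline X$, $\delta Y:=Y^{\e_j}-\overline Y$, $\delta Z:=Z^{\e_j}-\overline Z$, and note that $\delta u:=u^{\e_j}-\bar u=\chi_{[t,t+\e_j]}v+\Th\,\delta X$. Because the control system is linear, $\delta X$ solves \emph{exactly}
\[
d\,\delta X=(A_\Theta\,\delta X+B\chi_{[t,t+\e_j]}v)\,ds+(C_\Theta\,\delta X+D\chi_{[t,t+\e_j]}v)\,dW,\qquad \delta X(t)=0,
\]
and $(\delta Y,\delta Z)$ solves the companion linear BSDE driven by $\delta X$ and the spike $\widehat B\chi_{[t,t+\e_j]}v$. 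A direct computation with \eqref{s4.1-th1-eq3} gives $\overline Y={\bf P_2}\overline X$ and $\overline Z={\bf P_2}C_\Theta\overline X$; inserting the ansatz $\delta Y={\bf P_2}\,\delta X+\phi$, $\delta Z={\bf P_2}C_\Theta\,\delta X+{\bf P_2}D\chi_{[t,t+\e_j]}v+\zeta$ decouples the backward variation, where $(\phi,\zeta)$ solves a linear BSDE whose source $(\widehat B+\widehat D{\bf P_2}D+{\bf P_2}B)\chi_{[t,t+\e_j]}v$ is supported on $[t,t+\e_j]$. Standard estimates then yield $\mE_t|\delta X(s)|^2=O(\e_j)$ and $\sup_s\mE_t|\phi(s)|^2+\mE_t\int_t^T|\zeta|^2ds=O(\e_j^2)$.

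Next I would write $\Delta\cJ:=\cJ(t,\overline X(t);u^{\e_j})-\cJ(t,\overline X(t);\bar u)=\Delta_1+\Delta_2$, separating the part $\Delta_1$ linear in the variations from the quadratic part $\Delta_2$. In $\Delta_1$, after the substitutions above, every term paired with $\delta X$ carries the coefficient $\cQ:=Q+\Th^\top R\Th+{\bf P_2}^\top M{\bf P_2}+C_\Theta^\top{\bf P_2}^\top N{\bf P_2}C_\Theta$, which is exactly the inhomogeneity of ${\bf P_1}+{\bf P_3}$ in \eqref{s4.1-th1-eq3}. Applying It\^o's formula to $\langle({\bf P_1}+{\bf P_3})\overline X,\delta X\rangle$ on $[t,T]$ (using $\delta X(t)=0$ and the terminal data $G_1$) collapses these terms into the spike integral $\mE_t\int_t^{t+\e_j}\langle[B^\top({\bf P_1}+{\bf P_3})+D^\top({\bf P_1}+{\bf P_3})C_\Theta]\overline X,v\rangle ds$. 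The residual $\Delta_1$-terms, namely those containing $\phi,\zeta$ (coming from $\langle M\overline Y,\delta Y\rangle$ and $\langle N\overline Z,\delta Z\rangle$) together with the initial datum $\langle G_2\overline Y(t),\phi(t)\rangle$, I would convert into a spike integral by pairing $\phi$ with the forward adjoint $\eta$ solving $d\eta=(\widehat C^\top\eta+M{\bf P_2}\overline X)ds+(\widehat D^\top\eta+N{\bf P_2}C_\Theta\overline X)dW$ with $\eta(t)=G_2{\bf P_2}(t)\overline X(t)$; this choice of initial value cancels the $G_2$-term and leaves $\mE_t\int_t^{t+\e_j}\langle\eta,(\widehat B+\widehat D{\bf P_2}D+{\bf P_2}B)v\rangle ds$. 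At a Lebesgue point of the coefficients, dividing by $\e_j$ and letting $j\to\infty$ gives $\Delta_1/\e_j\to\langle\cE(t)\overline X(t),v\rangle$, where $\cE(t)$ is precisely the left-hand side of \eqref{s4.1-th1-eq2}.

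For $\Delta_2$ I would split the time integral at $t+\e_j$. On $[t+\e_j,T]$ one has $\delta u=\Th\,\delta X$, so up to $o(\e_j)$ (the $\phi,\zeta$ corrections being negligible by the estimates above) the quadratic integrand equals $\langle\cQ\,\delta X,\delta X\rangle$ with the same $\cQ$; the Lyapunov--Feynman--Kac identity for ${\bf P_1}+{\bf P_3}$ rewrites its conditional expectation as $\mE_t\langle({\bf P_1}+{\bf P_3})(t+\e_j;t)\,\delta X(t+\e_j),\delta X(t+\e_j)\rangle$. Since to leading order $\delta X(t+\e_j)=\int_t^{t+\e_j}D\,v\,dW$, its conditional covariance is $D(t)vv^\top D(t)^\top\e_j+o(\e_j)$, producing $\langle D^\top({\bf P_1}+{\bf P_3})Dv,v\rangle\e_j$. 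On $[t,t+\e_j]$ the only $O(\e_j)$ contributions are the control cost $\langle Rv,v\rangle$ and the ${\bf P_2}D\chi_{[t,t+\e_j]}v$ part of $\delta Z$, which together give $\langle(R+D^\top{\bf P_2}^\top N{\bf P_2}D)v,v\rangle\e_j$, all cross terms being $o(\e_j)$. Hence $\Delta_2/\e_j\to\tfrac12\langle\dbM(t)v,v\rangle$, with $\dbM(t)$ equal to the left-hand side of \eqref{s4.1-th1-eq1}.

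Combining the two limits, $\Delta\cJ/\e_j\to\langle\cE(t)\overline X(t),v\rangle+\tfrac12\langle\dbM(t)v,v\rangle$ for a.e. $t$. Replacing $v$ by $\lambda v$ and sending $\lambda\to0^\pm$ shows that the equilibrium inequality in Definition \ref{s4.1-de1} forces the linear term to vanish for every admissible $(x,v)$, i.e. $\cE(t)=0$, which is \eqref{s4.1-th1-eq2}; the surviving quadratic term then forces $\dbM(t)\ge0$, which is \eqref{s4.1-th1-eq1}. Conversely, under \eqref{s4.1-th1-eq1}--\eqref{s4.1-th1-eq2} the limit reduces to $\tfrac12\langle\dbM(t)v,v\rangle\ge0$, securing the defining inequality, and a measure-theoretic argument promotes the a.e.-$t$ statement to all $t\in[0,T)$. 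I expect the main obstacle to be the rigorous control of the martingale integrands $\delta Z$ and $\zeta$: the naive a priori bound on the $Z$-component of a BSDE is too weak to guarantee the $o(\e_j)$ remainders, so one must exploit that each backward equation is coupled to a well-behaved forward SDE and iterate the decoupling---this is the role of the auxiliary system $\{\cP_i\}_{i=1}^{14}$---to extract the hidden regularity of $Z$. A secondary difficulty is the dependence of the weights on the initial time $t$, which requires stability estimates for these auxiliary equations in order to pass to the limit and localize at Lebesgue points.
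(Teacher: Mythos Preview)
Your proposal is correct and follows the same high-level strategy as the paper --- a spike variation, a first/second-order split, decoupling of the backward components, and Lebesgue differentiation --- but your organization is genuinely different and in several places more economical than the paper's.

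The paper keeps the raw variations $(Y^\e_0,Z^\e_0)$ throughout and, because only the $O(\e_j)$ bound $\mE_t\int|Z^\e_0|^2\le\cC\e_j$ is available, must repeatedly decouple in order to isolate the $O(\e_j)$ coefficient of each spike integral; this is why fourteen auxiliary equations $\cP_1,\dots,\cP_{14}$ appear (Steps~3--6). Your key simplification is to peel off ${\bf P_2}\,\delta X$ from $(\delta Y,\delta Z)$ at the very start. The residual pair $(\phi,\zeta)$ then solves a BSDE whose only forcing lives on $[t,t+\e_j]$, so the standard BSDE estimate gives the sharper bound $\sup_s\mE_t|\phi(s)|^2+\mE_t\int|\zeta|^2\le\cC\e_j^2$. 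This single observation renders all $(\phi,\zeta)$-cross terms $o(\e_j)$ and replaces the tower $\cP_4$--$\cP_9$ (handling $J_3$ in the paper) by a one-line remainder estimate. Your forward adjoint $\eta$ is exactly the paper's $Y_5$, but because you pair it with $\phi$ rather than $Y^\e_0$, you never need the further reductions $\cP_{10}$--$\cP_{14}$: the spike integral $\mE_t\int_t^{t+\e_j}\langle\eta,(\widehat B+\widehat D{\bf P_2}D+{\bf P_2}B)v\rangle ds$ already has a deterministic initial limit $\eta(t)=G_2{\bf P_2}(t)x$. Similarly, your treatment of $\Delta_2$ via the Lyapunov identity for ${\bf P_1}+{\bf P_3}$ and the leading-order formula $\delta X(t+\e_j)=\int_t^{t+\e_j}Dv\,dW+O_{L^2}(\e_j)$ replaces the paper's $\cP_2,\cP_3,\cP_7,\cP_8,\cP_9$ machinery. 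In short, you end up needing only ${\bf P_1},{\bf P_2},{\bf P_3}$ and the single forward adjoint $\eta$; the paper's route is longer but has the virtue of making every limit completely explicit.

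Two points deserve a sentence more of care when you write this up. First, the converse (``if'' direction): the conditions \eqref{s4.1-th1-eq1}--\eqref{s4.1-th1-eq2} hold only for a.e.\ $t$, while Definition~\ref{s4.1-de1} requires the $\liminf$ to be nonnegative for \emph{every} $t$. The paper (Step~11) bridges this by comparing $\tfrac{1}{\e_j}\int_t^{t+\e_j}[\cdots](s,t)\,ds$ with $\tfrac{1}{\e_j}\int_t^{t+\e_j}[\cdots](s,s)\,ds$ using Assumption~\ref{s4.1-H1} and the stability bounds of Step~8; your ``measure-theoretic argument'' should do the same. Second, your closing paragraph suggests the $\{\cP_i\}_{i=1}^{14}$ hierarchy is still needed to control $\delta Z$ --- in fact your own $O(\e_j^2)$ bound on $(\phi,\zeta)$ already delivers the required regularity, so the anticipated obstacle does not materialize in your organization.
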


\begin{proof}[Proof of Theorem \ref{s4.1-th1}] 
Since the proof is long, we divide it  into several steps.  

\ms

{\bf Step 1}. In this step, we compute $\cJ(t,x;u^\e(\cd))-\cJ(t,x;{u(\cd)})$. 

\ms

For any $(t,x)\in [0,T)\times L_{\mathcal{F}_t}^2(\Omega;\mathbb{R}^n)$,
$v\in L_{\mathcal{F}_t}^2(\Omega;\mathbb{R}^k)$ and $\Theta(\cd)\in L^2(0,T;\mathbb{R}^{k\times n})$, consider the following equations:
\begin{equation}\label{s4.2-XY}
\begin{cases}\ds
	dX=A_\Theta Xds+C_\Theta XdW(s),  & s\in [t,T],\\
	\ns\ds dY=-\big(\widehat{A}_\Theta X+\widehat{C}Y+\widehat{D}Z \big)ds+Z dW(s),  & s\in [t,T],\\
	\ns\ds X(t)=x,\q Y(T)=HX(T),
\end{cases}
\end{equation}
and 
\begin{equation}\label{s4.2-XY-1}
\begin{cases}\ds
	dX^\e(s;t)=\big(A_\Theta X^\e(s;t)+\chi_{[t,t+\e]}B v\big)ds +\big(C_\Theta X^\e(s;t)+\chi_{[t,t+\e]}D v \big)dW(s),  & s\in [t,T],\\
	\ns\ds dY^\e(s;t)=-\big(\widehat{A}_\Theta X^\e(s;t)   + \!\chi_{[t,t+\e]}\widehat{B} v  + \widehat{C} Y^\e(s;t) + \widehat{D} Z^\e(s;t)\big)ds  \\
	\ns\ds\qq\qq\q +Z^\e(s;t)dW(s),  & s\in [t,T],\\
	\ns\ds X^\e(t;t)=x,\q Y^\e(T;t)=HX^\e(T;t).
\end{cases}
\end{equation}
Let $X^\e_0:=X^\e-X,Y^\e_0:=Y^\e-Y,Z^\e_0:=Z^\e-Z$. Then we have
\begin{equation}\label{var_eq}
\begin{cases}\ds 
	dX^\e_0(s;t)=\big(A_\Theta  X^\e_0(s;t)+\chi_{[t,t+\e]} B v \big)ds +\big( C_\Theta  X^\e_0(s;t) + \chi_{[t,t+\e]} D v\big)dW(s), & s\in [t,T],\\
	\ns\ds dY^\e_0(s;t)= -\big( \widehat{A}_\Theta  X^\e_0(s;t) + \chi_{[t,t+\e]}\widehat{B} v + \widehat{C} Y^\e_0(s;t) + \widehat{D} Z^\e_0(s;t) \big)ds \\
	\ns\ds\qq \qq\q+ Z^\e_0(s;t)dW(s), & s\in [t,T],
	\\
	\ns\ds X^\e_0(t;t)=0,\q Y^\e_0(T;t)=HX^\e_0(T;t).
\end{cases}
\end{equation}

A	direct calculation gives that
\begin{align*}
&	\cJ(t,x;u^\e(\cd))-\cJ(t,x;{u(\cd)})\\   &=\frac{1}{2}\mE_t\bigg[  \int_{t}^{T}\Big( \big\langle Q(s,t)X^\e_0(s;t),X^\e_0(s;t)\big \rangle + \big \langle R(s,t)\big(\Theta X^\e_0(s;t)+v\chi_{[t,t+\e]}\big),\Theta X^\e_0(s;t) \\ & \qq\qq\qq +v\chi_{[t,t+\e]} \big \rangle   + \big\langle M(s,t)Y^\e_0(s;t),Y^\e_0(s;t) \big\rangle + \big \langle N(s,t)Z^\e_0(s;t),Z^\e_0 (s;t)\big\rangle  \Big)ds  \\&\qq\q   +2  \int_{t}^{T}\Big( \big\langle Q(s,t)X,X^\e_0 (s;t) \big\rangle +  \big\langle R(s,t)\Theta X,\Theta X^\e_0(s;t) +v\chi_{[t,t+\e]} \big\rangle   \\& \qq\qq\q  +\big \langle M(s,t)Y,Y^\e_0(s;t) \big \rangle    +\big\langle N(s,t)Z,Z^\e_0 (s;t)\big \rangle  \Big)ds +   \big\langle G_1(t)X^\e_0(T;t),X^\e_0(T;t)\big \rangle \\& \qq\q +\big \langle G_2(t)Y^\e_0(t;t),Y^\e_0(t;t) \big\rangle  + 2   \big\langle G_1(t)X(T),X^\e_0(T;t) \big\rangle + 2\big\langle G_2(t)Y(t),Y^\e_0(t;t)\big \rangle   \bigg].
\end{align*}
Consequently,
it holds that
\begin{equation}\label{11.28-eq2}
\begin{array}{ll}\ds
	\cJ(t,x;u^\e(\cd))-\cJ(t,x;{u(\cd)})\\
	\ns\ds = J_1(t,x)+J_2(t,x)+J_3(t,x)+J_4(t,x)+\mE_t\int_{t}^{t+\e}\big\langle \Theta^\top R(s,t)v,X^\e_0 (s;t) \big  \rangle ds, 
\end{array}
\end{equation}
where\ss 
\begin{equation}\label{11.28-eq3}
\begin{cases} \ds 
	J_1(t,x):=\mE_t\int_{t}^{T} \big( \big\langle F_1 (s,t), X^\e_0(s;t) \big\rangle+ \big\langle F_2(s,t),v\chi_{[t,t+\e]}\big \rangle   \big)ds\\ \ns\ds \hspace{4.5em}+ \mE_t \big \langle G_1(t)X(T),X^\e_0(T;t) \big \rangle,\\ \ns \ds 
	J_2(t,x):=\frac{1}{2} \mE_t \int_{t}^{T} \big\langle F_1^\e(s,t),X^\e_0 (s;t)\big\rangle ds+ \frac{1}{2}\mE_t \big\langle G_1(t) X^\e_0(T;t),X^\e_0(T;t)\big \rangle ,\\
	\ns \ds
	J_3(t,x):=\frac{1}{2}\mE_t \int_{t}^{T}\big( \big\langle F^\e_3 (s,t),Y^\e_0 (s;t)\big\rangle+ \big \langle F^\e_4(s,t) ,Z^\e_0(s;t) \big\rangle  \big)ds \\ \ns\ds \hspace{4.5em} + \frac{1}{2}\mE_t \big\langle G_2(t)Y^\e_0(t;t),Y^\e_0(t;t) \big \rangle,\\
	\ns \ds
	J_4(t,x):= \mE_t \int_{t}^{T}\big(\big \langle F_3 (s,t),Y^\e_0 (s;t)\big\rangle+  \big\langle F_4 (s,t),Z^\e_0 (s;t)\big\rangle  \big)ds\\ \ns\ds \hspace{4.5em}+ \mE_t\big \langle G_2(t)Y(t),Y^\e_0(t;t) \big \rangle,
\end{cases}
\end{equation}
with 
\begin{equation}\label{11.28-eq4}
\begin{cases}\ds
	F_1(s,t)\equiv \big(Q(s,t)+\Theta(s)^\top R(s,t)\Theta(s) \big) X(s),\\ 
	\ns \ds F_2(s,t)\equiv \frac{1}{2}R(s,t)v+R(s,t)\Theta(s) X(s),\\
	\ns \ds F_1^\e(s,t)\equiv  \big( Q(s,t)+\Theta(s)^\top R(s,t)\Theta(s)\big) X^\e_0(s;t),\\
	\ns \ds F_3(s,t)\equiv M(s,t)Y(s),\q F_4(s,t)\equiv N(s,t)Z(s),\\
	\ns \ds F^\e_3(s,t)\equiv M(s,t)Y^\e_0(s;t),\q F^\e_4(s,t)\equiv N(s,t)Z^\e_0(s;t).
\end{cases}
\end{equation}
%

%\subsection{Characterization of $\cJ(t,x;u^\e(\cd))-\cJ(t,x;u(\cd))$ } \label{ssec-proof-pri1}

%In this subsection, we are going to  characterize the perturbation of the cost functional accurately.
%Before proceeding further, we  make the following preperation first.

\ms

{\bf Step 2}. In this step, we prove the following estimate:
\begin{equation}\label{s4.2-cor1}
\begin{aligned}
	&\mE_t\sup_{s\in[t,T]}|X^\e_0(s;t)|_{\dbR^n}^2\leq \mathcal{C}\e|v|_{\dbR^k}^2,\q \hbox{\rm a.s.,}\\
	&\mE_t\Big(\sup_{s\in[t,T]}|Y^\e_0(s;t)|_{\dbR^m}^2 + \int_{t}^{T}|Z^\e_0(s;t)|_{\dbR^m}^2 ds\Big)\leq  \mathcal{C}\e|v|_{\dbR^k}^2,\q \as
\end{aligned}
\end{equation} 

The inequality \eqref{s4.2-cor1} should be a known result. However, we failed to find an exact reference for it. Hence, we provide a proof here for the convenience of readers.

First,  for any $\cA\in \cF_t$, we have
$$
\begin{cases}\ns\ds
	d\chi_{\cA}X^\e_0(s;t)= \big(\chi_{\cA} A_\Theta X^\e_0(s;t) + \chi_{A}\chi_{[t,t+\e]} Bv \big)ds \\\ns\ds  \hspace{6.5em} + \big(\chi_{\cA}  C_\Theta X^\e_0(s;t) + \chi_{\cA} \chi_{[t,t+\e]}Dv\big)dW(s),\q s\in [t,T],\\ \ns\ds
	\chi_{\cA}X^\e_0(t;t)=0.
\end{cases}
$$
Then,
\begin{align}\label{pr3.1-pf-eq1} 
&	\mE\Big(\chi_{\cA}  \sup_{s\in[t,T]} |X^\e_0(s;t)|^2_{\dbR^n} \Big)\nonumber \\ 
	&=  \mE\Big(  \sup_{s\in[t,T]} |\chi_{\cA}  X^\e_0(s;t)|^2_{\dbR^n} \Big) \nonumber\\& \leq \cC \mE \Big[  \Big( \int_{t}^{t+\e}|\chi_{\cA} B  v  |_{\dbR^n} ds\Big)^2 + \int_{t}^{t+\e} |\chi_{\cA}D  v|^2_{\dbR^n}ds \Big]\\&\leq  \cC \mE \big(\chi_{\cA} \e  |v|^2_{\dbR^k} \big),\nonumber
\end{align}
where the constant $\cC$ only depends on $A_\Theta$, $C_{\Th}$, $B$, $D$, $T$ and is independent of $\cA$.
Since $v$ is $\cF_t$-measurable, we have
$$
\mE_t\sup_{s\in[t,T]}|X^\e_0(s;t)|_{\dbR^n}^2\leq \mathcal{C}\e|v|_{\dbR^k}^2,\q \hbox{\rm a.s.}
$$

Secondly, we similarly consider the following equation:
$$
\begin{cases}\ns\ds
	d\chi_{\cA}Y^\e_0(s;t)=-\big(\chi_{\cA} \widehat{A}_\Th X^\e_0(s;t)+ \chi_{\cA}\chi_{[t,t+\e]}\widehat{B}v + \chi_{\cA} \widehat{C}Y^\e_0 (s;t) \\\ns\ds\hspace{7.5em}  +  \chi_{\cA}\widehat{D} Z^\e_0(s;t) \big)ds + \chi_{\cA} Z^\e_0 (s;t)dW(s),\q s\in [t,T],\\ \ns\ds
	\chi_{\cA}Y^\e_0(T;t)=\chi_{\cA}H X^\e_0(T;t).
\end{cases}
$$
Then from \eqref{pr3.1-pf-eq1}, we have
\begin{align*}
	&\mE \bigg[\chi_{\cA} \Big( \sup_{s\in[t,T]}|Y^\e_0(s;t)|^2_{\dbR^m} +\int_{t}^{T}|Z^\e_0(s;t)|^2_{\dbR^m} ds \Big) \bigg] \\& =  \mE \  \Big( \sup_{s\in[t,T]}|\chi_{\cA} Y^\e_0(s;t)|^2_{\dbR^m} +\int_{t}^{T}|\chi_{\cA}  Z^\e_0(s;t)|^2_{\dbR^m} ds \Big)\\&\leq  \cC \mE \bigg[\chi_{\cA}  |X^\e_0(T;t)|^2_{\dbR^n}+   \Big( \int_{t}^{T} \big  | \chi_{\cA}\widehat{A}_\Theta    X^\e_0(s;t)+ \chi_{\cA}\chi_{[t,t+\e]}\widehat{B}  v  \big|_{\dbR^m}  ds\Big)^2  \bigg]  \\&\leq \cC \mE (\chi_{\cA} \e |v|_{\dbR^k}^2 ).
\end{align*}
This implies that
$$
\mE_t\bigg(\sup_{s\in[t,T]}|Y^\e_0(s;t)|_{\dbR^m}^2 + \int_{t}^{T}|Z^\e_0(s;t)|_{\dbR^m}^2 ds\bigg)\leq  \mathcal{C}\e|v|_{\dbR^k}^2,\q \as
$$

In the following four steps, we deal with $J_i(t,x)$, $i=1,\cds,4$.

\ms

{\bf Step 3}. In this step, we give an estimate of  $J_1(t,x)$.

\ms

First, consider the following adjoint equation:
\begin{equation}\label{s4.2.1-eq3}
\begin{cases}
dY_1(s;t) =-\big[ A_\Theta^\top Y_1 (s;t)+C_\Theta^\top Z_1(s;t) + \big( Q(s;t) \\\ns\ds \hspace{6em} +\Theta^\top R(s;t) \Theta  \big) X \big] ds+Z_1(s;t)dW(s),\q s\in [t,T],\\ \ns \ds 
Y_1(T;t)=G_1(t)X(T).
\end{cases}
\end{equation} 
Applying Ito's  formula for $\langle Y_1,X^\e_0 \rangle$, we have
\begin{equation} \label{s4.2-P1-1}
J_1(t,x)=\mE_t\int_{t}^{t+\e} \big\langle B^\top Y_1(s;t)+D^\top Z_1(s;t)+F_2(s,t),v \big \rangle ds.
\end{equation}

Next, we get rid of the terms containing $Y_1$ and $Z_1$ in \eqref{s4.2-P1-1}. To this end, we introduce the following equation:
\begin{equation} \label{s4.2-P1}
\begin{cases}\ns \ds
\frac{d\cP_1(s;t)}{ds}+ \cP_1(s;t) A_\Theta    +A_\Theta  ^\top  \cP_1(s;t) \\ \ns\ds \q   +C_\Theta  ^\top  \cP_1(s;t)C_\Theta  +Q(s,t)+\Theta  ^\top R(s,t)\Theta   =0, & s\in [t,T],\\\ns \ds 
\cP_1(T;t)=G_1(t).
\end{cases}
\end{equation}
By \eqref{s4.2-XY} and \eqref{s4.2-P1},  we have
\begin{equation*}
\begin{aligned}
d\big(\cP_{1}X\big)&=-\big(\cP_1 A_\Theta  +A_\Theta^\top  \cP_1   +C_\Theta^\top  \cP_1 C_\Theta+Q +\Theta^\top R\Theta  \big)Xds +\cP_1 A_\Theta Xds+\cP_1C_\Theta XdW(s)
\\&=-\big(A_\Theta^\top  \cP_1 + C_\Theta^\top  \cP_1 C_\Theta  +Q +\Theta^\top R\Theta \big )X   ds+  \cP_1C_\Theta X dW(s).
\end{aligned}
\end{equation*}
This, together with $\cP_1(T;t)=G_1(t)$, implies that $(\cP_1X,\cP_1C_\Theta X)$ is a solution to \eqref{s4.2.1-eq3}. By the uniqueness of the solution to \eqref{s4.2.1-eq3}, we obtain
\begin{equation}\label{s4.2.1-eq2}
\begin{cases}
\ds Y_1(\cd;t)=\cP_1(\cd;t)X(\cd),\\ 
\ns\ds	Z_1(\cd;t)=\cP_1(\cd;t)C_\Theta (\cd)X(\cd).
\end{cases}
\end{equation}
This, together with \eqref{s4.2-P1-1}, implies that
\begin{equation}\label{11.8-eq5}
\begin{aligned}
&J_1(t,x)\\&=\mE_t\int_{t}^{t+\e} \big\langle B^\top Y_1(s;t)+D^\top Z_1(s;t)+F_2(s,t),v \big\rangle ds\\&=
\mE_t\int_{t}^{t+\e} \Big \langle \big( B^\top \cP_1(s;t)+ D^\top \cP_1(s;t)C_\Theta+R(s,t)\Theta\big)X+\frac{1}{2}R(s,t)v,v \Big  \rangle ds
\end{aligned}
\end{equation}

\ms

{\bf Step 4}. In this step, we give an estimate of  $J_2(t,x)$.

\ms

First, we introduce the following adjoint equation:
\begin{equation}\label{11.20-eq1}
\begin{cases}\ds
dY_2(s;t)=-\big[A_\Theta^\top Y_2(s;t)+C_\Theta^\top Z_2(s;t) +\big(Q(s,t)+\Theta^\top R(s,t)\Theta\big) X^\e_0(s;t)\big]ds  \\ \ns\ds \hspace{6em} +Z_2(s;t)dW(s), & s\in [t,T],\\ \ns \ds 
Y_2(T;t)=G_1(t)X^\e_0(T;t).
\end{cases}
\end{equation} 
Applying Ito's formula for $\langle Y_2,X^\e_0 \rangle$, we have
\begin{equation}\label{11.20-eq2}
J_2(t,x)=\frac{1}{2}\mE_t\int_{t}^{t+\e} \big  \langle B^\top Y_2 (s;t)+D^\top Z_2(s;t),v \big \rangle ds.
\end{equation}

To get rid of the terms containing $Y_2$ and $Z_2$ in \eqref{11.20-eq2}, we introduce the following equations: 
\begin{equation}\label{s4.2-P2}
\begin{cases} \ns\ds 
\frac{d\cP_2(s;t)}{ds}+ \cP_2(s;t)A_\Theta      +A_\Theta  ^\top \cP_2(s;t) + C_\Theta  ^\top \cP_2(s;t)  C_\Theta   + Q(s,t)+\Theta  ^\top R(s,t)\Theta   =0, & s\in [t,T],  \\ \ns\ds 
\cP_2(T;t)=G_1(t), 
\end{cases}
\end{equation}
and
\begin{equation}\label{s4.2-P3}
\begin{cases} \ds 
\frac{d \cP_3(s;t)}{ds}+ A_\Theta   ^\top \cP_3(s;t)+ \chi_{[t,t+\e]}\cP_2(s;t) B + \chi_{[t,t+\e]}C_\Theta   ^\top \cP_2(s;t) D   =0, & s\in [t,T],\\ \ns\ds
\cP_3(T;t)=0.
\end{cases}
\end{equation}
From \eqref{var_eq}, \eqref{s4.2-P2} and \eqref{s4.2-P3},  we get
\begin{equation*}
\begin{array}{ll}\ds
d\big(\cP_2 X^\e_0 + \cP_3v \big) \\ 
\ns\ds=-\big(\cP_2 A_\Theta    +A_\Theta^\top \cP_2  +C_\Theta^\top \cP_2  C_\Theta + Q +\Theta^\top R \Theta\big)X^\e_0ds \\
\ns \ds \q - \big(A_\Theta ^\top \cP_3 v +  \chi_{[t,t+\e]}\cP_2 Bv +   \chi_{[t,t+\e]}C_\Theta ^\top \cP_2  D v\big)ds \\\ns \ds \q +\cP_2\big( A_\Theta X^\e_0+\chi_{[t,t+\e]}Bv \big)ds  
  +\cP_2 \big(C_\Theta X^\e_0+\chi_{[t,t+\e]} Dv \big)dW(s)  \\
\ns \ds = -\big [ A_\Theta^\top\big ( \cP_2 X^\e_0  + \cP_3 v\big) +C_\Theta^\top \cP_2 \big( C_\Theta X^\e_0 + \chi_{[t,t+\e]}Dv\big) + \big (Q +\Theta^\top R \Theta) X^\e_0  \big ]ds \\ \ns \ds \qq  +   \cP_2\big( C_\Theta X^\e_0 + \chi_{[t,t+\e]} Dv  \big) dW(s).
\end{array}
\end{equation*}
This, together with $\cP_2(T;t)=G_1(t)$ and $\cP_3(T;t)=0$, implies that $\big(\cP_2 X^\e_0 + \cP_3v, \cP_2 (C_\Theta X^\e_0 + \chi_{[t,t+\e]} Dv) \big)$ is a solution to \eqref{11.20-eq1}. Then,  the uniqueness of the solution to \eqref{11.20-eq1}  implies that 
\begin{equation}\label{s4.2.2-eq1}
	\begin{cases}\ds
		Y_2(\cd;t) =\cP_2(\cd;t) X^\e_0(\cd;t)+{\cP}_3(\cd;t) v,\\ \ns\ds 
		Z_2(\cd;t) = \cP_2 (\cd;t)\big(  C_\Theta (\cd)X^\e_0(\cd;t)+D(\cd)v\chi_{[t,t+\e]}  \big).
	\end{cases}
\end{equation}

By Gronwall's inequality, we get from \eqref{s4.2-P3} that
\begin{equation}\label{s4.2.2-eq2}
\begin{aligned}
\sup_{s\in[t,T]}|\cP_3(s;t)|_\infty^2 &\leq \mathcal{C} \Big(\int_{t}^{t+\e} \big |\cP_2(\tau;t)B+ C_\Theta ^\top \cP_2(\tau ;t) D \big|_\infty d\tau  \Big)^2\\&\leq 
\cC \Big(\int_{t}^{t+\e} (1+|\Theta|_\infty) d\tau\Big)^2.
\end{aligned}
\end{equation}
Combining \eqref{s4.2.2-eq1}, \eqref{s4.2.2-eq2} and  \eqref{s4.2-cor1}, we obtain that
\begin{equation}\label{11.8-eq6}
\begin{aligned}
J_2(t,x)&=\frac{1}{2}\mE_t\int_{t}^{t+\e} \Big\langle B^\top Y_2(s;t)+D^\top Z_2(s;t),v\Big \rangle ds\\&=\frac{1}{2} \mE_t \!\int_{t}^{t+\e}\!\!\! \Big \langle \big (B^\top\! \cP_2(s;t)\!+ D^\top\!  \cP_2(s;t)  C_\Theta  \big)X^\e_0 (s;t)\! \\&\qq \qq \q  + \big( B^\top\! {\cP}_3(s;t)\!+\! D^\top\! \cP_2(s;t) \chi_{[t,t+\e]}D \big)v ,v \!\Big \rangle ds\\&= \mE_t\int_{t}^{t+\e} \Big \langle \frac{1}{2} D^\top \cP_2(s;t) D v ,v \Big \rangle ds+\mathcal{C} |v|_{\dbR^k}^2 o(\e).
\end{aligned}
\end{equation}

\ms

{\bf Step 5}. In this step, we give an estimate of  $J_3(t,x)$.

\ms

We first introduce the adjoint equation:
\begin{equation}\label{s4.2-Y3}
\begin{cases}\ds
dY_3(s;t)=\big(\widehat{C}^\top Y_3(s;t)+F_3^\e(s,t)\big)ds   +\big(\widehat{D}^\top Y_3(s;t)+F_4^\e(s,t)\big)dW(s),\q s\in [t,T],\\ \ns \ds 
Y_3(t;t)=G_2(t)Y^\e_0(t;t).
\end{cases}
\end{equation}
Applying Ito's formula for $\langle Y_3,Y^\e_0 \rangle$, we have that
\begin{equation*}
	\begin{array}{ll}\ns\ds 
J_3(t,x)= \frac{1}{2}\mE_t  \big\langle Y_3(T;t),HX^\e_0(T;t)  \big\rangle  +\frac{1}{2} \mE_t\int_{t}^{t+\e}  \big\langle \widehat{B}^\top Y_3(s;t),v  \big\rangle ds \\  \ns\ds  \hspace{4.5em}+\frac{1}{2}\mE_t\int_{t}^{T}  \big\langle \widehat{A}_\Theta ^\top Y_3(s;t),X^\e_0(s;t) \big \rangle ds .
\end{array}
\end{equation*}
Similar to the proof of the inequality \eqref{s4.2-cor1}, we can prove that
\begin{equation}\label{s4.2-cor2-eq1}
\mE_t\sup_{s\in[t,T]}|Y_3(s;t)|^2\leq \cC \e |v|_{\dbR^k}^2, \q \as
\end{equation}
%

%\begin{remark}\label{rm1.2}
%{  By  Proposition \ref{s4.2-cor1} and Proposition  \ref{s4.2-cor2}, we can see that  the term 
%\begin{equation*}
%\mE_t \langle Y_3(T;t),HX^\e_0(T) \rangle  +\mE_t\int_{t}^{T} \langle \widehat{A}_\Theta^\top Y_3,X^\e_0 \rangle ds,
%\end{equation*}
%are of order $\mathcal{O}(\e)$, which is not enough for our later analyses. Hence, we introduce another adjoint equation $Y_4(\cd;t)$.}
%\end{remark}

Next, consider the following equation:
\begin{equation}\label{Y4}
\begin{cases}\ds
dY_4(s;t)=-\big( A_\Theta ^\top Y_4(s;t)+C_\Theta ^\top Z_4 (s;t)+ \widehat{A}_\Theta ^\top Y_3(s;t) \big)ds  +Z_4(s;t)dW(s),\q s\in [t,T],\\ \ns \ds 
Y_4(T;t)=H^\top Y_3(T;t).
\end{cases}
\end{equation}
Similar to the proof of the inequality \eqref{s4.2-cor1},  we can show that
\begin{equation}\label{s4.2-cor3-eq1}
\mE_t\Big( \sup_{s\in[t,T]}|Y_4(s;t)|^2+\int_{t}^{T}|Z_4(s;t)|^2ds \Big)\leq \cC\e |v|_{\dbR^k}^2,\q \as 
\end{equation}

Applying Ito's formula to $\langle Y_4,X^\e_0 \rangle$, we obtain
\begin{equation*}
J_3(t,x)=\frac{1}{2} \mE_t\int_{t}^{t+\e}\big \langle \widehat{B}^\top Y_3(s;t) +{B}^\top Y_4(s;t)+D^\top Z_4(s;t) ,v \big\rangle ds.
\end{equation*}

From the inequalities \eqref{s4.2-cor2-eq1} and \eqref{s4.2-cor3-eq1}, we find that
\begin{equation}\label{s4.2.3-eq1}
J_3(t,x)=\frac{1}{2}\mE_t\int_{t}^{t+\e} \big \langle D^\top Z_4(s;t),v \big \rangle ds+\mathcal{C}|v|_{\dbR^k}^2{o}(\e).
\end{equation}
In the rest of this step, we handle the term $\ds \frac{1}{2}\mE_t\int_{t}^{t+\e}\big \langle D^\top Z_4(s;t),v \big \rangle ds$.

\ss

We first introduce the following equations:
\begin{equation}\label{s4.2-P4P5}
\begin{cases}\ns\ds 
\frac{d\cP_4  }{ds}+ \cP_4  \widehat{C}  ^\top +A_\Theta  ^\top \cP_4    +C_\Theta  ^\top \cP_4   \widehat{D}  ^\top + \widehat{A}_\Theta  ^\top=0,& s\in [t,T], \\
\cP_4(T)=H^\top, 
\end{cases}
\end{equation}
and
\begin{equation}\label{s4.2-P4P5-1}
\begin{cases} 
\ds  d\cP_5(s;t)=-\big(A_\Theta  ^\top \cP_5(s;t)+C_\Theta  ^\top \cL_5(s;t)+ \cP_4 F_3^\e(s,t) + C_\Theta  ^\top \cP_4   F_4^\e (s,t) \big)ds \\ \ns\ds  \hspace{6em} +\cL_5(s;t)dW(s), & s\in [t,T],\\
\cP_5(T;t)=0.
\end{cases}
\end{equation}

Combining \eqref{s4.2-Y3}, \eqref{s4.2-P4P5} and \eqref{s4.2-P4P5-1}, we get that
\begin{equation*}
\begin{array}{ll}\ds
d\big(\cP_4 Y_3 +\cP_5 \big)\\
\ns\ds =-\big(\cP_4\widehat{C}^\top +A_\Theta^\top \cP_4  +C_\Theta^\top \cP_4 \widehat{D}^\top + \widehat{A}_\Theta^\top\big) Y_3ds  +\cP_4\big(\widehat{C}^\top Y_3+F_3^\e\big)ds  +\cP_4\big(\widehat{D}^\top Y_3+F_4^\e\big)dW(s)  \\
\ns\ds\q  - \big( A_\Theta^\top \cP_5 +C_\Theta^\top \cL_5 + \cP_4 F_3^\e  + C_\Theta^\top \cP_4 F_4^\e \big)ds+\cL_5dW(s)\\ \ns \ds  = -\big\{A_\Theta ^\top \big(\cP_4 Y_3 +\cP_5 \big) + C_\Theta ^\top \big[\cP_4 \big(\widehat{D}^\top Y_3 +F_4^\e \big) +\cL_5 \big]+ \widehat{A}_\Theta ^\top Y_3\big\}ds  +  \big[\cP_4 \big(\widehat{D}^\top Y_3 +F_4^\e \big) +\cL_5 \big]dW(s).
\end{array}
\end{equation*}
This, together with $\cP_4(T)=H^\top$ and $\cP_5(T;t)=0$, implies that $\big(\cP_4 Y_3 +\cP_5, \cP_4 (\widehat{D}^\top Y_3 +F_4^\e) +\cL_5\big)$ is a solution to \eqref{Y4}. From the uniqueness of the solution to \eqref{Y4}, we find that 
\begin{equation}\label{11.28-eq1}
	\begin{cases} \ds
Y_4(\cd;t) = \cP_4(\cd) Y_3(\cd;t) +\cP_5(\cd;t),\\ \ns\ds    Z_4(\cd;t)  =  \cP_4(\cd) \widehat{D}(\cd)^\top  Y_3(\cd;t) +\cP_4(\cd)  N(\cd,t) Z^\e_0(\cd;t) +\cL_5(\cd;t).
\end{cases} 
\end{equation}
Combining \eqref{11.28-eq1} and \eqref{s4.2-cor2-eq1}, we get that 
\begin{equation}\label{s4.2.3-eq3}
\begin{aligned}
&\frac{1}{2}	\mE_t\int_{t}^{t+\e} \big\langle D^\top Z_4(s;t),v \big\rangle ds\\&=\frac{1}{2} \mE_t\int_{t}^{t+\e} \big\langle D^\top \big(\cP_4 N(s,t)Z^\e_0(s;t) +\cL_5(s;t)\big),v \big\rangle ds+\mathcal{C}|v|_{\dbR^k}^2o(\e).
\end{aligned}
\end{equation}  

Next,  we introduce the following equations:\ss 
\begin{equation}\label{s4.2-P6P7}
\begin{cases}\ns\ds 
\frac{d\cP_6  }{ds}+ \cP_6  A_\Theta  +  \widehat{A}_\Theta    +\widehat{C}  \cP_6   +\widehat{D}   \cP_6  C_\Theta   =0, & s\in [t,T],\\
\ns\ds \frac{d \cP_7(s;t)}{ds}+\widehat{C}   \cP_7(s;t)+ \chi_{[t,t+\e]} \cP_6   B   +   \chi_{[t,t+\e]}\widehat{B} +\widehat{D}   \cP_6  D  \chi_{[t,t+\e]} =0, & s\in [t,T],\\
\ns\ds	\cP_6(T)=H,\q \cP_7(T;t)=0.
\end{cases}
\end{equation}
Similar to the proof of \eqref{11.28-eq1}, we can show that
\begin{equation}\label{s4.2.3-eq2-1}
	\begin{cases}\ds
		Y^\e_0(\cd;t) =\cP_6(\cd) X^\e_0(\cd;t) + \chi_{[t,t+\e]}\cP_7(\cd;t)v,\\ \ns\ds 	Z^\e_0(\cd;t) =\cP_6(\cd) \big( C_\Theta(\cd) X^\e_0(\cd;t) + \chi_{[t,t+\e]}D(\cd) v  \big).
	\end{cases}
\end{equation}
This, together with the inequality \eqref{s4.2-cor1}, yields
\begin{equation}\label{s4.2.3-eq2}
\begin{array}{ll}\ds
\frac{1}{2}	\mE_t\int_{t}^{t+\e} \Big\langle D^\top \cP_4 N(s,t)Z^\e_0(s;t),v \Big\rangle ds\\
\ns\ds = 	\frac{1}{2}	\mE_t\int_{t}^{t+\e} \Big\langle D^\top \cP_4 N(s,t)	\cP_6 \big( C_\Theta X^\e_0(s;t)+Dv\chi_{[t,t+\e]}  \big),v \Big\rangle ds\\ \ns\ds =\mE_t\int_{t}^{t+\e} \Big\langle 	\frac{1}{2} D^\top \cP_4 N(s,t)	\cP_6  Dv  ,v \Big\rangle ds+\mathcal{C}|v|_{\dbR^k}^2o(\e).
\end{array}
\end{equation}

\ss

Lastly, by \eqref{s4.2.3-eq2-1}, we can rewrite the equation \eqref{s4.2-P4P5} as
\begin{equation}\label{s4.2-P5}
\begin{cases}
\ds d\cP_5(s;t)=\!-\big[A_\Theta   ^\top \cP_5(s;t)+C_\Theta   ^\top \cL_5(s;t)\!+ \cP_4  M(s,t)  \big(\cP_6  X^\e_0(s;t) +  \chi_{[t,t+\e]}\cP_7(s;t)v\big)  \\ \ns\ds\qq\qq\qq + C_\Theta   ^\top \cP_4  N(s,t) 	\cP_6   \big( C_\Theta   X^\e_0(s;t)  +\chi_{[t,t+\e]} D  v\big)  \big]ds +\cL_5(s;t)dW(s),\qq  s\in [t,T],\\
\ns\ds\cP_5(T;t)=0.
\end{cases}
\end{equation}
Let us further introduce the following equations:
\begin{equation}\label{s4.2-P8P9}
\begin{cases}\ns\ds 
\frac{d\cP_8(s;t)}{ds}+ \cP_8(s;t)A_\Theta   	+A_\Theta   ^\top \cP_8(s;t)   +C_\Theta   ^\top \cP_8(s;t)C_\Theta   \\ \ns\ds \q  +\cP_4  M(s,t)\cP_6      + C_\Theta   ^\top \cP_4  N(s,t) 	\cP_6   C_\Theta   =0, &s\in[t,T], \\
\ns\ds \cP_8(T;t)=0, 
\end{cases}
\end{equation}
and
\begin{equation}\label{s4.2-P8P9-1}
	\begin{cases} \ds 
 \frac{d\cP_9(s;t)}{ds}+   A_\Theta  ^\top \cP_9(s;t) + \chi_{[t,t+\e]} C_\Theta  ^\top \cP_8(s;t) D + \chi_{[t,t+\e]} \cP_8(s;t) B   \\ \ns\ds \q   + \chi_{[t,t+\e]}\cP_4  M(s,t)  \cP_7(s;t) + \chi_{[t,t+\e]}C_\Theta  ^\top \cP_4  N(s,t) \cP_6  D  =0,  &s\in[t,T],\\
		\ns\ds  \cP_9(T;t)=0.
	\end{cases}
\end{equation}

Using Ito's formula, we have
\begin{align*}
	&d\big(\cP_8 X^\e_0 +\cP_9 v\big) \\
	\ns\ds& =-\big(\cP_8 A_\Theta 	+A_\Theta ^\top \cP_8  +C_\Theta ^\top \cP_8 C_\Theta +\cP_4 M \cP_6 + C_\Theta ^\top \cP_4 N \cP_6 C_\Theta\big) X^\e_0ds  \\
	\ns\ds &\q + \cP_8 \big( A_\Theta X^\e_0+\chi_{[t,t+\e]}Bv \big)ds +\cP_8 \big( C_\Theta X^\e_0+\chi_{[t,t+\e]} Dv \big)dW(s) \\
	\ns\ds &\q - \big(A_\Theta^\top \cP_9 + \chi_{[t,t+\e]}C_\Theta^\top \cP_8 D +\chi_{[t,t+\e]} \cP_8 B + \chi_{[t,t+\e]} \cP_4 M \cP_7   \\
	\ns\ds &\qq + \chi_{[t,t+\e]} C_\Theta^\top \cP_4 N \cP_6 D\big)v ds \\ 
	\ns\ds &= -\big[ A_\Theta ^\top \big(\cP_8 X^\e_0 +\cP_9 v\big) +C_\Theta ^\top\cP_8 \big(C_\Theta X^\e_0 +\chi_{[t,t+\e]} D v \big)   \\
	\ns\ds& \qq + \cP_4 M \big(\cP_6 X^\e_0+  \chi_{[t,t+\e]}\cP_7 v\big)+ C_\Theta^\top \cP_4 N \cP_6 \big( C_\Theta X^\e_0+\chi_{[t,t+\e]} D v\big)\big]ds \\
	\ns\ds &\q + \cP_8 \big( C_\Theta X^\e_0 + \chi_{[t,t+\e]} D v \big) dW(s).
\end{align*}
This, together with $\cP_8(T;t)=0$ and $\cP_9(T;t)=0$, implies that $\big(\cP_8 X^\e_0 +\cP_9 v, \cP_8 ( C_\Theta X^\e_0 + \chi_{[t,t+\e]} D v )\big)$ is a solution to \eqref{s4.2-P5}. Then,  the uniqueness of the solution to \eqref{s4.2-P5} implies that 
\begin{equation*}
\begin{cases} \ds
\cP_5(\cd,t) =\cP_8(\cd;t) X^\e_0(\cd;t) +\cP_9(\cd;t) v,\\ \ns\ds 	\cL_5(\cd;t) =\cP_8 (\cd;t)\big( C_\Theta(s)  X^\e_0(\cd;t) + \chi_{[t,t+\e]} D(\cd) v \big).
\end{cases}
\end{equation*}
This, together with the inequalities \eqref{s4.2-cor1}, implies that
\begin{equation}\label{s4.2.3-eq4}
\begin{aligned}
&\frac{1}{2}	\mE_t\int_{t}^{t+\e} \big \langle D^\top \cL_5(s;t),v \big \rangle ds\\&= \frac{1}{2}	\mE_t\int_{t}^{t+\e} \Big\langle D^\top \cP_8(s;t) \big( C_\Theta X^\e_0+Dv\chi_{[t,t+\e]}  \big),v \Big\rangle ds\\&=\mE_t\int_{t}^{t+\e} \Big\langle \frac{1}{2} D^\top \cP_8(s;t)  Dv  ,v \Big\rangle ds+\mathcal{C}|v|_{\dbR^k}^2o(\e).
\end{aligned}
\end{equation}

Consequently, from equations \eqref{s4.2.3-eq1},  \eqref{s4.2.3-eq3}, \eqref{s4.2.3-eq2} and \eqref{s4.2.3-eq4}, we obtain that
\begin{equation}\label{s4.2.3-eq5}
\begin{aligned}
J_3(t,x)&=\frac{1}{2}\mE_t\int_{t}^{t+\e} \big\langle D^\top Z_4(s;t),v \big\rangle ds+\mathcal{C}|v|_{\dbR^k}^2{o}(\e)\\&=\frac{1}{2}\mE_t\int_{t}^{t+\e} \Big\langle 	\big( D^\top \cP_4 N(s,t)	\cP_6  D +  D^\top \cP_8(s;t)  D \big)  v  ,v \Big\rangle ds+\mathcal{C}|v|_{\dbR^k}^2o(\e)
\end{aligned}
\end{equation}

\ms

{\bf Step 6}. In this step, we give an estimate of  $J_4(t,x)$.

\ms

We first introduce the following adjoint equation:
\begin{equation}\label{s4.2-Y5}
\begin{cases}\ds
dY_5(s;t)=\big(\widehat{C}^\top Y_5(s;t)+F_3(s,t)\big)ds +\big(\widehat{D}^\top Y_5(s;t)+F_4(s,t)\big)dW(s),\q s\in [t,T],\\ \ns \ds 
Y_5(t;t)=G_2(t)Y(t).
\end{cases}
\end{equation}
Applying Ito's formula for $\langle Y_5,Y^\e_0 \rangle$, we have
\begin{equation}\label{11.21-eq4}
	\begin{array}{ll}\ns\ds
J_4(t,x)= \mE_t  \big \langle Y_5(T;t),HX^\e_0(T;t)\big \rangle  + \mE_t\int_{t}^{t+\e} \big\langle \widehat{B}^\top Y_5(s;t),v \big\rangle ds  \\\ns\ds \qq \qq \, +\mE_t\int_{t}^{T}\big \langle \widehat{A}_\Theta^\top Y_5(s;t),X^\e_0 (s;t)\big\rangle ds .
\end{array}
\end{equation}
Similar to the treatment of $J_3(t,x)$ in Step 5, we  introduce another adjoint equation:
\begin{equation}\label{11.21-eq1}
\begin{cases} \ds
dY_6(s;t)=-\big(A_\Theta^\top Y_6(s;t)+ C_\Theta^\top Z_6(s;t) + \widehat{A}_\Theta^\top Y_5(s;t) \big)ds + Z_6(s;t)dW(s), &  s\in [t,T],\\ \ns \ds 
Y_6(T;t)=H^\top Y_5(T;t).
\end{cases}
\end{equation}
Applying Ito's formula for $\langle Y_6,X^\e_0 \rangle$, we get
\begin{equation}\label{11.21-eq2}
\begin{aligned}
J_4(t,x)=&\mE_t\int_{t}^{t+\e} \big\langle \widehat{B}^\top Y_5(s;t),v \big\rangle ds+ \mE_t\int_{t}^{t+\e} \big\langle {B}^\top Y_6(s;t)+D^\top Z_6(s;t),v\big \rangle ds\\=& \mE_t\int_{t}^{t+\e} \big\langle \widehat{B}^\top Y_5(s;t) +{B}^\top Y_6(s;t)+D^\top Z_6(s;t) ,v\big \rangle ds  .
\end{aligned}
\end{equation}

Now, our  goal is  to represent $Y_5(\cd;t)$ and $\big(Y_6(\cd;t),Z_6(\cd;t)\big)$ by $X(\cd)$.  To this end, consider the following equations:
\begin{equation}\label{s4.2-P10P11}
\begin{cases}\ns\ds 
\frac{d\cP_{10}  }{ds}+ \cP_{10}  \widehat{C}  ^\top   + A_\Theta  ^\top \cP_{10}   +C_\Theta  ^\top \cP_{10}    \widehat{D}  ^\top  + \widehat{A}_\Theta  ^\top =0,& s\in [t,T],\\ \ns\ds 
d\cP_{11}(s;t)=-\big( A_\Theta  ^\top \cP_{11}(s;t)+C_\Theta   ^\top \cL_{11}(s;t) +\cP_{10}  F_3(s,t) 	 \\\ns\ds  \hspace{6.5em}  +C_\Theta  ^\top \cP_{10}  F_4(s,t) \big)ds+\cL_{11}(s;t)dW(s), & s\in [t,T],\\ \ns\ds 
\cP_{10}(T)=H^\top,\q \cP_{11}(T;t)=0.
\end{cases}
\end{equation}
Similar to the proof of \eqref{11.28-eq1}, we can show that
\begin{equation}\label{11.21-eq3}
	\begin{cases}\ds
		Y_6(\cd;t) =\cP_{10}(\cd) Y_5(\cd;t) +\cP_{11}(\cd;t) ,\\ \ns\ds
		Z_6(\cd;t) =\cP_{10}(\cd) \big(\widehat{D}(\cd)^\top Y_5(\cd;t) +F_4(\cd,t) \big) +\cL_{11}(\cd;t).
	\end{cases}
\end{equation}
From \eqref{11.21-eq4} and \eqref{11.21-eq3}, we obtain 
\begin{align}\label{s4.2.4-eq2}
\nonumber&J_4(t,x)\\&=\mE_t\int_{t}^{t+\e}  \Big \langle \widehat{B}^\top Y_5(s;t)  +{B}^\top Y_6(s;t)  +D^\top Z_6 (s;t), v \Big\rangle ds \\&\nonumber= \mE_t\!\int_{t}^{t+\e}\! \Big\langle  \big( \widehat{B}^\top  +{B}^\top \cP_{10}  \nonumber + D^\top  \cP_{10}  \widehat{D}^\top \big)  Y_5(s;t) +{B}^\top \cP_{11}(s;t) \\& \qq \qq \q +D^\top  \cP_{10} F_4(s,t)  +D^\top \cL_{11}(s;t), v \Big\rangle ds.
\end{align}

\ms 

Next, we introduce the following equation:
\begin{equation}\label{s4.2-P12}
\begin{cases}\ns\ds 
\frac{d \cP_{12}  }{ds}+  \cP_{12}   A_\Theta   	+ \widehat{A}_\Theta    +\widehat{C}   \cP_{12}   +\widehat{D}    \cP_{12}   C_\Theta   =0,& s\in [0,T],\\ \ns\ds 
\cP_{12}(T)=H.
\end{cases}
\end{equation}
Similar to the proof of \eqref{11.28-eq1}, we can obtain that
\begin{equation}\label{s4.2-XtoYZ}
\begin{cases} \ds	Y(\cd)= \cP_{12}(\cd)X(\cd), \\
 \ns \ds Z(\cd)= \cP_{12}(\cd) C_\Theta(\cd) X(\cd). \end{cases}
\end{equation}

By \eqref{s4.2-XtoYZ} and \eqref{s4.2-Y5}, we get that
\begin{equation}\label{s4.2.4-eq1}
\begin{cases} \ds
dY_5(s;t)=\big(\widehat{C}^\top Y_5(s;t)+M(s,t)\cP_{12}X\big)ds \\ \ns\ds \hspace{5em}+\big(\widehat{D}^\top Y_5(s;t)+N(s,t)\cP_{12} C_\Theta X\big)dW(s),\q s\in [t,T],\\ \ns \ds 
Y_5(t;t)=G_2(t)\cP_{12}(t)X(t).
\end{cases}
\end{equation}
Now we study the relationship between $Y_5$ and $X$. To this end,  we introduce the following equation:\ss 
\begin{equation}\label{s4.2-P13}
\begin{cases} \ds 
	d\cP_{13}(s;t)= \big[\big( \cP_{13}(s;t)C_\Theta  -\widehat{D}  ^\top \cP_{13}(s;t)-N(s,t) \cP_{12}   C_\Theta    \big) C_\Theta   \\ \ns \ds \hspace{6em} -\cP_{13}(s;t)A_\Theta   +\widehat{C}  ^\top \cP_{13}(s;t)+M(s,t) \cP_{12}     \big]ds \\ \ns\ds  \hspace{6em}  + \big( -\cP_{13}(s;t)C_\Theta  +\widehat{D}  ^\top \cP_{13}(s;t)   +N(s,t)\cP_{12}   C_\Theta  \big) dW(s), \qq\qq s\in [t,T], \\
\ns\ds 	\cP_{13}(t;t)=G_2(t) \cP_{12}(t).
\end{cases}
\end{equation}
Similar to the proof of \eqref{11.28-eq1}, we get that 
\begin{equation}\label{s4.2.4-eq4}
Y_5(\cd;t)=\cP_{13}(\cd;t)X(\cd).
\end{equation}
Combining \eqref{s4.2.4-eq2}, \eqref{s4.2-XtoYZ} and \eqref{s4.2.4-eq4},  we obtain that
\begin{equation}\label{s4.2.4-eq3}
\begin{aligned}
	J_4(t,x)&= \mE_t\int_{t}^{t+\e} \Big\langle  \big( \widehat{B}^\top +{B}^\top \cP_{10} + D^\top  \cP_{10}  \widehat{D}^\top \big)  Y_5(s;t)  \\&\qq \qq \q +{B}^\top \cP_{11}(s;t)+D^\top  \cP_{10} F_4 (s,t)  +D^\top \cL_{11} (s;t) ,v \Big\rangle ds\\&= \mE_t\!\int_{t}^{t+\e}\! \Big\langle  \big[\big( \widehat{B}^\top +{B}^\top \cP_{10} + D^\top  \cP_{10}  \widehat{D}^\top \big)  \cP_{13}(s;t) +D^\top  \cP_{10} N(s,t)\cP_{12}C_\Theta\big] X \\ &\qq\qq\q+{B}^\top \cP_{11} (s;t)   +D^\top \cL_{11}(s;t), v \Big\rangle ds.
\end{aligned}
\end{equation}

\ss

Next, we derive the relationship between $X(\cd)$ and  $\big(\cP_{11}(\cd;t),\cL_{11}(\cd;t)\big)$.  First, 
by \eqref{s4.2-XtoYZ}, we can rewrite the second equation of \eqref{s4.2-P10P11} as
\begin{equation}\label{11.21-eq5}
\begin{cases}\ns\ds 
	d\cP_{11}(s;t)=-\big(A_\Theta   ^\top \cP_{11}(s;t)+C_\Theta   ^\top \cL_{11}(s;t) +\cP_{10}   M(s,t)\cP_{12}   X    \\ \ns\ds \hspace{6em}	 +C_\Theta   ^\top \cP_{10}   N(s,t)\cP_{12}    C_\Theta    X    \big)ds+\cL_{11}(s;t)dW(s),\\
	\cP_{11}(T;t)=0.
\end{cases}
\end{equation}

Secondly, we introduce the following equation:
\begin{equation}\label{s4.2-P14}
\begin{cases} \ds 
	\frac{d\cP_{14}(s;t)}{ds}+ \cP_{14}(s;t)A_\Theta      + A_\Theta   ^\top \cP_{14}(s;t)  +C_\Theta   ^\top \cP_{14}(s;t)C_\Theta     \\ \ns\ds \q +\cP_{10}   M(s,t)\cP_{12}      +C_\Theta   ^\top \cP_{10}   N(s,t)\cP_{12}   C_\Theta =0,& s\in [t,T],\\ \ns\ds 
	\cP_{14}(T;t)=0,
\end{cases}
\end{equation}
Similar to the proof of \eqref{11.28-eq1}, we can prove that
\begin{equation}\label{11.21-eq6}
\begin{cases} \ds \cP_{11}(\cd;t)=\cP_{14}(\cd;t)X(\cd), \\ \ns\ds 
\cL_{11}(\cd;t)=\cP_{14}(\cd;t)C_\Theta(\cd)X(\cd). 
\end{cases}
\end{equation}

From \eqref{s4.2.4-eq3} and \eqref{11.21-eq6}, we obtain that
\begin{equation}\label{s4.2-J4}
\begin{aligned}
&J_4(t,x)\\&= \mE_t\!\int_{t}^{t+\e}\! \Big\langle  \big[\big( \widehat{B}^\top \!+{B}^\top \cP_{10}\! + D^\top  \cP_{10}  \widehat{D}^\top \big)  \cP_{13}(s;t) \\&\qq\qq \q   +D^\top  \cP_{10} N(s,t)\cP_{12}C_\Theta\big] X \!+{B}^\top \cP_{11}(s;t)  \! +D^\top \cL_{11}(s;t)  ,v \Big\rangle ds\\&=
\mE_t\!\int_{t}^{t+\e}\! \Big\langle  \big[\big( \widehat{B}^\top +{B}^\top \cP_{10} + D^\top  \cP_{10}  \widehat{D}^\top \big)  \cP_{13}(s;t) \\&\qq\qq\q  +D^\top  \cP_{10} N(s,t)\cP_{12}C_\Theta\!+\! {B}^\top \cP_{14}(s;t)  + D^\top \cP_{14}(s;t)C_\Theta \big] X,v \Big\rangle ds
\end{aligned}
\end{equation}

\ms

{\bf Step 7}. We combine the results in Step 3--Step 6 to study $\cJ(t,x;u^\e(\cd))-\cJ(t,x;u(\cd))$.

\ms

From \eqref{11.28-eq2}--\eqref{11.28-eq4},  \eqref{11.8-eq5}, \eqref{11.8-eq6}, \eqref{s4.2.3-eq5} and \eqref{s4.2-J4}, for any $(t,x)\in [0,T)\times L_{\mathcal{F}_t}^2(\Omega;\mathbb{R}^n)$,
$v\in L_{\mathcal{F}_t}^2(\Omega;\mathbb{R}^k)$ and $\Theta(\cd)\in L^2(0,T;\mathbb{R}^{k\times n})$,  we have that
\begin{align}\label{s4.2-Jdelta}
\nonumber& \cJ(t,x;u^{\e}(\cd))-\cJ(t,x;{u(\cd)})\\ \nonumber&= 		\Big \langle \Big( \int_{t}^{t+\e} \frac{1}{2}R(s,t)ds + \int_{t}^{t+\e} \frac{1}{2} D^\top \cP_2(s;t) D ds  + \int_{t}^{t+\e}  \frac{1}{2} D^\top \cP_8(s;t) Dds  \\\nonumber&\q +   \int_{t}^{t+\e}  \frac{1}{2}  D^\top \cP_4 N(s,t)	\cP_6  D ds  \Big)  v,v \Big  \rangle  +  \Big \langle    \  \mE_t \int_{t}^{t+\e}  \big[   B^\top \cP_1(s;t) \\\nonumber&\q + D^\top \cP_1(s;t)C_\Theta+R(s,t)\Theta + \big( \widehat{B}^\top +{B}^\top \cP_{10}   + D^\top  \cP_{10}  \widehat{D}^\top \big)  \cP_{13}(s;t) \\&\q +D^\top  \cP_{10} N(s,t)\cP_{12}C_\Theta + {B}^\top \cP_{14}(s;t)+ D^\top \cP_{14}(s;t)C_\Theta \big] X ds  , v \Big  \rangle+ \cC |v|_{\dbR^k}^2 o(\e).
\end{align}
Moreover, by comparing the equation \eqref{s4.1-th1-eq3} with  the equations \eqref{s4.2-P1}, \eqref{s4.2-P2},   \eqref{s4.2-P4P5}, \eqref{s4.2-P6P7}, \eqref{s4.2-P8P9}, \eqref{s4.2-P10P11}, \eqref{s4.2-P12}, \eqref{s4.2-P14}, we see that
for any $\Th\in L^2(0,T;\dbR^{k\times n})$ and  $0\leq t\leq s \leq T$, it holds that
\begin{equation}\label{s4.2-pr1}
\begin{cases}\ds
{\bf P_1}(s;t)=\cP_1(s;t)= \cP_2(s;t),\\ \ns\ds {\bf P_2}(s)^\top=\cP_4(s)=\cP_{10}(s)=\cP_6(s)^\top =\cP_{12}(s)^\top,  \\ \ns\ds {\bf P_3}(s;t)=\cP_8(s;t)=\cP_{14}(s;t),
\end{cases}\q s\in [t,T].
\end{equation}

\ms

{\bf Step 8}. In this step, we establish some regularity estimate.% for $\cP_1$, $\cP_2$, $\cP_8$ and $\cP_{14}$.

\ms

To this end, we first make some preparation. For $X(\cdot)$ and $Y_5(\cdot;t)=\cP_{13}(\cdot;t)X(\cd)$  (see equation \eqref{s4.2.4-eq4}), by the equations \eqref{s4.2-XY}, \eqref{s4.2.4-eq1}, we can see that $\mathbb{E}_t X(\cd;t)$ and $\mathbb{E}_t Y_5(\cdot;t)$ satisfy the following equations:
\begin{equation*}
\begin{cases}
	d\mathbb{E}_t X(s;t)=A_\Theta   \mathbb{E}_t X(s;t)ds,\qquad s\in [t,T],\\
	\mathbb{E}_t X(t;t)=x,
\end{cases}
\end{equation*}
and
\begin{equation*}
\begin{cases}
	d\mathbb{E}_t Y_5(s;t)=\big(\widehat{C}   ^\top \mathbb{E}_t Y_5(s;t)+M(s,t)\cP_{12}   \mathbb{E}_t X(s;t)\big)ds,\qquad s\in [t,T],\\
	\mathbb{E}_t Y_5(t;t)=G_2(t)\cP_{12}(t)x.
\end{cases}
\end{equation*}
Consider the following equations:
\begin{equation*}
\begin{cases}\ns\ds 
	\frac{d \Psi(s;t)}{ds}=A_\Theta    \Psi(s;t), \qquad s\in [t,T],\\ \ds 
	\Psi(t;t)=I,
\end{cases}
\end{equation*}
and
\begin{equation*}
\begin{cases}\ns\ds
	\frac{d\Gamma(s;t)}{ds}= \widehat{C}   ^\top \Gamma(s;t)+M(s,t)\cP_{12}    \Psi(s;t) , \qquad s\in [t,T],\\ \ds 
	\Gamma(t;t)=G_2(t)\cP_{12}(t).
\end{cases}
\end{equation*}
Then we have
\begin{equation}\label{s4.3-eq1}
\begin{cases}\ds
	\mathbb{E}_t X(s;t)=\Psi(s;t)x,\\
	\ns\ds
	\mathbb{E}_t Y_5(s;t)=\Gamma(s;t)x.
\end{cases}
\end{equation}

Rewrite the equation for $\Gamma(\cd;t)$ as
\begin{equation}
	\begin{cases}\ns\ds 
		\frac{d\Gamma(s;t)}{ds}= \widehat{C}   ^\top \Gamma(s;t)+M(s,t)\cP_{12}    \Psi(s;t),\q s\in [\tau,T],\\ \ds 
		\Gamma(\tau;t)=	\Gamma(\tau;t),
	\end{cases}
\end{equation}
where $ \,0\leq t\leq \tau \leq  s\leq T$. Then 
	\begin{align*}
	&	\big|\Gamma(\tau;t)\!-\!\Gamma(\tau;\tau)\big|_\infty\\
		& \leq  \big|G_2(t)\cP_{12}(t)\!-\!G_2(\tau)\cP_{12}(\tau)\big|_\infty +  \int_{t}^{\tau} \big|\widehat{C}^\top \Gamma(s;t) + M(s, t)\cP_{12} \Psi(s;t)  \big|_\infty ds   \\&\leq 
		\mathcal{C}\big(|t-\tau |+|\cP_{12}(t)-\cP_{12}(\tau)|_\infty\big ),
	\end{align*}
which, together with Assumptions \ref{s4.1-H1},  yields
\begin{equation*}
	\begin{array}{ll}\ds 
		|\Gamma(s;t)-\Gamma(s;\tau)|_\infty\\
		\ns\ds \leq |\Gamma(\tau;t)-\Gamma(\tau;\tau) |_\infty+ \int_{\tau}^{s} \big|\widehat{C}^\top \big( \Gamma(\eta;t)-\Gamma(\eta;\tau) \big)  +M(\eta,t)\cP_{12} \Psi(\eta;t) \\\ns\ds  \hspace{12.5em}-M(\eta,\tau)\cP_{12} \Psi(\eta;\tau)  \big|_\infty d\eta   \\ \ns\ds \leq 
		\mathcal{C}\big(|t-\tau |+|\cP_{12}(t)-\cP_{12}(\tau)|_\infty \big)  +\int_{\tau}^{s} \big(\mathcal{C} |\Gamma(\eta;t)-\Gamma(\eta;\tau)|_\infty+\mathcal{C} |t-\tau|\big)d\eta.
	\end{array}
\end{equation*}
This, together with Gronwall's inequality, implies 
$$
|\Gamma(s;t)-\Gamma(s;\tau)|_\infty \leq \mathcal{C}\big(|t-\tau |+|\cP_{12}(t)-\cP_{12}(\tau)|_\infty\big).
$$
By a similar (but simpler) argument, we can show that  for any $ \,0\leq t\leq \tau \leq  s\leq T$, it holds that
\begin{equation}\label{s4.3-pr1-eq1}
	\begin{cases}\ds 
		|\Psi(s;t)-\Psi(s;\tau)|_\infty \leq \mathcal{C} |t-\tau|, \\
		\ns\ds |\Gamma(s;t)-\Gamma(s;\tau)|_\infty \leq \mathcal{C}\big(|t-\tau |+|\cP_{12}(t)-\cP_{12}(\tau)|_\infty\big),\\
		\ns\ds |\cP_1(s;t)-\cP_1(s;\tau)|_\infty \leq \mathcal{C} |t-\tau|, \\
		\ns\ds 
	  |\cP_8(s;t)-\cP_8(s;\tau)|_\infty  \leq \mathcal{C} |t-\tau|. 
	\end{cases}
\end{equation}

\ms

{\bf Step 9}. In this step, we prove the following result:

\ms
\begin{equation}\label{s4.3-pr2-eq1}
\!\!\!\!\!\!\begin{cases}\ns\ds 
\lim_{\e \to 0}\frac{1}{\e} \int_{t}^{t+\e} \frac{1}{2}R(s,t)ds =\frac{1}{2}R(t,t),\q  t\in [0,T],\\  \ns\ds 
\lim_{\e \to 0}	\frac{1}{\e}\int_{t}^{t+\e} D   ^\top \cP_2(s;t) D    ds=D(t)^\top \cP_2(t;t) D(t),\q \ae t\in [0,T],\\  \ns\ds 
\lim_{\e \to 0}	\frac{1}{\e}\int_{t}^{t+\e}   D   ^\top \cP_8(s;t) D   ds= D(t)^\top \cP_8(t;t) D(t),\q \ae t\in [0,T],\\  \ns\ds 
\lim_{\e \to 0}	\frac{1}{\e}\!	\int_{t}^{t+\e} \!\!   D   ^\top \cP_4    N(s,t)	\cP_6     D    ds\!=\!  D(t)^\top \cP_4(t) N(t,t)	\cP_6(t)  D (t),\; \ae t\!\in\! [0,T],
\end{cases}
\end{equation}
and 
\begin{equation}\label{s4.3-pr2-eq2}
\begin{cases}
\ds\lim_{\e \to 0}\frac{1}{\e} 	\int_{t}^{t+\e}     B   ^\top \cP_1(s;t)\Psi(s;t)ds= B(t)^\top \cP_1(t;t),\\ \ns\ds
\lim_{\e \to 0}\frac{1}{\e} 	\int_{t}^{t+\e}  	 D   ^\top \cP_1(s;t)C_\Theta   \Psi(s;t)ds=  D(t) ^\top \cP_1(t;t)C_\Theta(t),\\ \ns\ds
\lim_{\e \to 0}\frac{1}{\e} \!\int_{t}^{t+\e}\! \big[ 	R(s,t)\Theta\!+\!{B}^\top    \cP_{14}(s;t)\big] \Psi(s;t)ds= R(t,t)\Theta(t)\!+\!B(t) ^\top \cP_{14}(t;t),\\ \ns\ds
\lim_{\e \to 0}\frac{1}{\e} \! \int_{t}^{t+\e}\!\!	D   ^\top  \cP_{10} N(s,t)\cP_{12}C_\Theta\Psi(s;t) ds= D(t) ^\top \cP_{10}(t) N(t,t)\cP_{12}(t)C_\Theta(t) ,  \\ \ns\ds
\lim_{\e \to 0}\frac{1}{\e} \int_{t}^{t+\e} D   ^\top \cP_{14}(s;t)C_\Theta   \Psi(s;t)ds= D(t) ^\top \cP_{14}(t;t)C_\Theta(t),\\ \ns\ds
\lim_{\e \to 0}\frac{1}{\e} \!	\int_{t}^{t+\e} \! \big( \widehat{B}   ^\top +{B}   ^\top \cP_{10} + D   ^\top  \cP_{10}  \widehat{D}   ^\top  \big)\Gamma(s;t)ds\\  \ns\ds = \big(\widehat{B}(t)^\top + {B}(t)^\top \cP_{10}(t) + D (t)^\top \cP_{10}(t)  \widehat{D}(t)^\top \big) G_2(t)\cP_{12}(t),
\end{cases} \q \ae t\in [0,T]
\end{equation}

Since
\begin{align*}
&\Big| \frac{1}{\e}\int_{t}^{t+\e} D   ^\top \cP_2(s;t) D    ds-D(t)^\top \cP_2(t;t) D(t)\Big|_\infty\\& \leq \frac{1}{\e}\int_{t}^{t+\e}\big|  D   ^\top \big(\cP_2(s;t)-\cP_2(t;t)\big) D    \big|_\infty ds\\&\q  +\Big|  \frac{1}{\e}\int_{t}^{t+\e}\!\!  D   ^\top \cP_2(t;t) D    ds\!-\! D(t) ^\top \cP_2(t;t) D(t) \Big|_\infty\\ &\leq \mathcal{C} \frac{1}{\e}\int_{t}^{t+\e}\big|   \cP_2(s;t)-\cP_2(t;t)  \big|_\infty ds \\&\q   +\Big|  \frac{1}{\e}\int_{t}^{t+\e}  D   ^\top \cP_2(t;t) D    ds- D(t)^\top \cP_2(t;t) D(t) \Big|_\infty\\&\leq \mathcal{C} \frac{1}{\e}\int_{t}^{t+\e}\big|   \cP_2(s;t)-\cP_2(t;t)  \big|_\infty ds+\mathcal{C} \frac{1}{\e}\int_{t}^{t+\e} | D   ^\top - D(t)^\top|_\infty ds,
\end{align*}
by  Lebesgue's differentiation theorem, we have
\begin{equation*}
\lim_{\e \to 0}	\frac{1}{\e}\int_{t}^{t+\e} D^\top \cP_2(s;t) D ds=D(t)^\top \cP_2(t;t) D(t),\q \ae t\in [0,T].
\end{equation*}
Other formulas in 	\eqref{s4.3-pr2-eq1} can be proved similarly. 

Except for the second equality in \eqref{s4.3-pr2-eq2},  the other ones can be proved similarly.  Since
\begin{align*}
& \Big |\frac{1}{\e} 	\int_{t}^{t+\e}  	 D   ^\top \cP_1(s;t)C_\Theta   \Psi(s;t)ds-  D(t)^\top \cP_1(t;t)C_\Theta(t)\Big|_\infty\\&\leq 
\frac{1}{\e} 	\int_{t}^{t+\e} \big| D    ^\top \big( \cP_1(s;t) - \cP_1(t;t)\big)C_\Theta   \Psi(s;t)\big|_\infty ds \\&\q  + \frac{1}{\e} \!	\int_{t}^{t+\e} \big| D   ^\top \cP_1(t;t)C_\Theta   \big(\Psi(s;t) - \Psi(t;t)\big)\big|_\infty ds \\&\q + \Big| \frac{1}{\e} 	\int_{t}^{t+\e}  	 D^\top \cP_1(t;t)C_\Theta\Psi(t;t)ds- D(t)^\top \cP_1(t;t)C_\Theta(t) \Big|_\infty\\&\leq \mathcal{C} \frac{1}{\e} 	\int_{t}^{t+\e} \big| \cP_1(s;t)-\cP_1(t;t)\big|_\infty(1+|\Theta   |_\infty)ds  \\&\q   + \mathcal{C} \frac{1}{\e} 	\int_{t}^{t+\e} \big|  \Psi(s;t)-\Psi(t;t)\big|_\infty(1+|\Theta   |_\infty)ds\\ &\q+ \Big| \frac{1}{\e} 	\int_{t}^{t+\e} D   ^\top \cP_1(t;t)C_\Theta   \Psi(t;t)ds- D(t)^\top \cP_1(t;t)C_\Theta(t) \Big|_\infty\\&\leq  \mathcal{C}  \Big( \frac{1}{\e} 	\int_{t}^{t+\e}  (1+|\Theta   |_\infty)^2ds \Big)^{1/2} \bigg[ \Big( \frac{1}{\e} 	\int_{t}^{t+\e} \big| \cP_1(s;t)-\cP_1(t;t)\big|_\infty^2ds\Big)^{1/2} \\&\q  +\Big( \frac{1}{\e} 	\int_{t}^{t+\e} \big| \Psi(s;t)-\Psi(t;t)\big|_\infty^2ds\Big)^{1/2} \bigg] \\&\q  +\Big| \frac{1}{\e} 	\int_{t}^{t+\e}  	 D^\top \cP_1(t;t)C_\Theta\Psi(t;t)ds - D(t) ^\top \cP_1(t;t)C_\Theta(t) \Big|_\infty
\\&\leq  \mathcal{C}  \Big( \frac{1}{\e} 	\int_{t}^{t+\e}   (1\! +\! |\Theta   |_\infty)^2ds \Big)^{1/2} \! \bigg[ \Big(  \frac{1}{\e} 	\int_{t}^{t+\e}  \big| \cP_1(s;t)\! -\! \cP_1(t;t)\big|_\infty^2ds\! \Big)^{1/2}   \\&\q +\!  \Big(  \frac{1}{\e} \! 	\int_{t}^{t+\e} \! \! \! \big| \Psi(s;t)\! -\! \Psi(t;t)\big|_\infty^2ds\Big)^{1/2} + \Big( \frac{1}{\e} 	\int_{t}^{t+\e} \big| D   ^\top-D(t)^\top\big|_\infty^2ds \Big)^{1/2} \bigg] \\&\q +\mathcal{C} \frac{1}{\e} 	\int_{t}^{t+\e} \big|  C_\Theta   -C_\Theta(t) \big|_\infty ds,
\end{align*}
we get the second equality in \eqref{s4.3-pr2-eq2} immediately. 

\ss

{\bf Step 10}. In this step, we prove the ``only if" part of Theorem \ref{s4.1-th1}.

\ss

Choose any  sequence $\{\e_j\}_{j=1}^\infty\subset (0,+\infty)$ satisfying  $\lim\limits_{j\to\infty}\e_j=0$. By \eqref{s4.2-Jdelta}, 
for any $ (t,x,v)\in [0,T)\times L_{\mathcal{F}_t}^2(\Omega;\mathbb{R}^n)\times L_{\mathcal{F}_t}^2(\Omega;\mathbb{R}^k)$, we have that
\begin{align*}
& \liminf_{j\to \infty}	\frac{\cJ(t,x;u^{\e_j}(\cd))-\cJ(t,x;{u(\cd)})}{\e_j}\\&= 	\liminf_{j\to \infty}	\bigg \{\bigg \langle \frac{1}{\e_j}\Big( \int_{t}^{t+\e_j} \frac{1}{2}R(s,t)ds + \int_{t}^{t+\e_j} \frac{1}{2} D^\top \cP_2(s;t) D ds \\&\qq\qq  + \int_{t}^{t+\e_j}  \frac{1}{2} D^\top \cP_8(s;t) Dds  +   \int_{t}^{t+\e_j}  \frac{1}{2}  D^\top \cP_4    N(s,t)	\cP_6     D ds  \Big)  v,v \bigg  \rangle  \\&\qq \q  +  \bigg \langle   \frac{1}{\e_j} \Big\{  \mE_t \int_{t}^{t+\e_j}  \Big[   B^\top \cP_1(s;t) + D^\top \cP_1(s;t)C_\Theta+R(s,t)\Theta\\&\qq\qq  + \big( \widehat{B}^\top +{B}^\top \cP_{10} + D^\top  \cP_{10}  \widehat{D}^\top \big)  \cP_{13}(s;t)   +D^\top  \cP_{10} N(s,t)\cP_{12}C_\Theta \\&\qq\qq  + {B}^\top \cP_{14}(s;t)+ D^\top \cP_{14}(s;t)C_\Theta \Big] X ds\Big\}  , v \bigg  \rangle \bigg\} \geq 0, \q \dbP\mbox{-a.s.}
\end{align*}
This, together with \eqref{s4.2.4-eq4}, \eqref{s4.2-pr1}, \eqref{s4.3-eq1}, \eqref{s4.3-pr2-eq1} and \eqref{s4.3-pr2-eq2}, yields \eqref{s4.1-th1-eq1} and \eqref{s4.1-th1-eq2}.

\ms

{\bf Step 11}. In this step, we prove the ``if" part of Theorem \ref{s4.1-th1}.

\ms

Choose any  sequence $\{\e_j\}_{j=1}^\infty\subset (0,+\infty)$ satisfying  $\lim\limits_{j\to\infty}\e_j=0$. By  \eqref{s4.1-th1-eq1}, \eqref{s4.1-th1-eq2} and \eqref{s4.2-pr1}, for any $t\in [0,T]$, we have 
\begin{equation}\label{s4.3-th1pf-eq1}
\begin{aligned}
\frac{1}{\e_j} \int_{t}^{t+\e_j} \big(	R(s,s) +  D   ^\top \cP_2(s;s) D     +  D    ^\top \cP_8(s;s) D  + D    ^\top \cP_4    N(s,s)	\cP_6     D    \big)ds\geq 0,
\end{aligned}
\end{equation}
and
\begin{equation}\label{s4.3-th1pf-eq2}
\begin{aligned}
&\frac{1}{\e_j} \!\int_{t}^{t+\e_j} \!\!\Big[  B   ^\top \cP_1(s;s)\!+\! D     ^\top \cP_1(s;s)C_\Theta    \! +\! {B}   ^\top \cP_{14}(s;s) \!  \\ &  \qq \q+ D   ^\top \cP_{14}(s;s)C_\Theta     +R(s,s)\Theta     +\big(\widehat{B}    ^\top\!+\! {B}    ^\top \cP_{10}   \!  \\& \qq\q + D     ^\top \cP_{10}     \widehat{D}    ^\top\big) G_2   \cP_{12}     + \ D   ^\top  \cP_{10}    N(s,s)\cP_{12}   C_\Theta    \Big]ds\!=\!0.
\end{aligned}
\end{equation}
On the other hand,	by Assumption  \ref{s4.1-H1} and \eqref{s4.3-pr1-eq1}, it holds that
\begin{align*}
&\bigg| \frac{1}{\e_j} \int_{t}^{t+\e_j} \big( R(s,s) +  D   ^\top \cP_2(s;s) D     +  D   ^\top \cP_8(s;s) D      \\&  +      D   ^\top \cP_4    N(s,s)	\cP_6     D     \big)  ds  - \!\! \frac{1}{\e_j} \!\int_{t}^{t+\e_j} \big( R(s,t) \!+ \! D   ^\top \cP_2(s;t) D    \\&     +  D   ^\top \cP_8(s;t) D      \!  + \!     D   ^\top \cP_4    N(s,t)	\cP_6     D   \big) ds \bigg|_\infty \! \leq \!\mathcal{C}\e_j,
\end{align*}
and
\begin{align*}
&	\bigg| \frac{1}{\e_j} \int_{t}^{t+\e_j} \Big[  B   ^\top \cP_1(s;s)+ D    ^\top  \cP_1(s;s)C_\Theta     + {B}   ^\top \cP_{14}(s;s)   \\ & \qq \qq \q  + D   ^\top \cP_{14}(s;s)C_\Theta     +R(s,s)\Theta    +\big(\widehat{B}   ^\top +  {B}   ^\top \cP_{10}    \\&	\qq \qq \q + D    ^\top \cP_{10}     \widehat{D}   ^\top \big) G_2   \cP_{12}     +   D    ^\top \cP_{10}    N(s,s)\cP_{12}   C_\Theta     \Big]ds \\ &- \frac{1}{\e_j} \int_{t}^{t+\e_j} \Big[  B   ^\top \cP_1(s;t)\Psi(s;t)   + D     ^\top \cP_1(s;t)C_\Theta    \Psi(s;t)  \\ &	\qq\qq\qq  + {B}   ^\top \cP_{14}(s;t) \Psi(s;t)   + D   ^\top \cP_{14}(s;t)C_\Theta   \Psi(s;t)  \\& \qq\qq\qq  + R(s,t)\Theta   \Psi(s;t) +\big(\widehat{B}    ^\top + {B}   ^\top \cP_{10}      + D    ^\top \cP_{10}     \widehat{D}   ^\top\big ) \Gamma(s;t)   \\&\qq\qq\qq +   D    ^\top \cP_{10}    N(s,t)\cP_{12}   C_\Theta   \Psi(s;t) \Big]ds   \bigg|_\infty\\&\leq  \mathcal{C}\e_j+\mathcal{C}\int_{t}^{t+\e_j}\big(1+|\Theta|_\infty\big)ds+\mathcal{C}\frac{1}{\e_j} \int_{t}^{t+\e_j}\big|\Gamma(s;s)-\Gamma(s;t)\big|_\infty ds\\&\leq \mathcal{C}\e_j+\mathcal{C}\int_{t}^{t+\e_j}\big(1+|\Theta|_\infty\big)ds+\frac{\mathcal{C}}{\e_j} \int_{t}^{t+\e_j}\big|\cP_{12}(t)-\cP_{12}  \big|_\infty ds\\&\leq \mathcal{C}\e_j+\mathcal{C}\int_{t}^{t+\e_j}\big(1+|\Theta|_\infty\big)ds+\frac{\mathcal{C}}{\e_j} \int_{t}^{t+\e_j}\!\int_{t}^{s}\big(1+|\Theta(\tau)|_\infty\big )d\tau ds.
\end{align*}
Combining \eqref{s4.3-th1pf-eq1},  \eqref{s4.3-th1pf-eq2} and \eqref{s4.2-Jdelta}, we get
\begin{align*}
&	\liminf_{j\to \infty}	\frac{\cJ(t,x;u^{\e_j}(\cd))-\cJ(t,x;{u(\cd)})}{\e_j}\\&= 	\liminf_{j\to \infty}	\bigg \{\bigg \langle \frac{1}{2\e_j}\Big[ \int_{t}^{t+\e_j} \big(R(s,s) +  D   ^\top \cP_2(s;s) D     +  D   ^\top \cP_8(s;s) D        \\&\q +      D   ^\top \cP_4    N(s,s)	\cP_6     D    \big)ds  \Big]  v,v \bigg  \rangle  +  \bigg \langle  \frac{1}{\e_j} \int_{t}^{t+\e_j} \Big[  B   ^\top \cP_1(s;s)  \\ &\q + D     ^\top \cP_1(s;s)C_\Theta     + {B}   ^\top \cP_{14}(s;s)  + D   ^\top \cP_{14}(s;s)C_\Theta    +R(s,s)\Theta    \\ & \q   +\big(\widehat{B}   ^\top+ {B}   ^\top \cP_{10}    + D     ^\top \cP_{10}     \widehat{D}   ^\top\big) G_2   \cP_{12}    \\&\q+   D   ^\top  \cP_{10}    N(s,s)\cP_{12}   C_\Theta     \Big]ds \,\, x  , v \bigg  \rangle \bigg\} \\&\geq	   \bigg \langle \liminf_{j\to \infty}\frac{1}{2\e_j}\Big[ \int_{t}^{t+\e_j} \big(	R(s,s) +  D   ^\top \cP_2(s;s) D     +  D^\top \cP_8(s;s) D        \\&\q +      D   ^\top \cP_4    N(s,s)	\cP_6     D    \big)ds  \Big]  v,v \bigg  \rangle  +  \bigg \langle  \liminf_{j\to \infty} \frac{1}{\e_j} \int_{t}^{t+\e_j} \Big[  B   ^\top \cP_1(s;s)  \\ &\q + D     ^\top \cP_1(s;s)C_\Theta     + {B}   ^\top \cP_{14}(s;s)   + D   ^\top \cP_{14}(s;s)C_\Theta    + R(s,s)\Theta   \\ & \q  +\big(\widehat{B}   ^\top+ {B}   ^\top \cP_{10}    + D     ^\top \cP_{10}     \widehat{D}   ^\top \big) G_2   \cP_{12}    \\&\q+   D   ^\top  \cP_{10}    N(s,s)\cP_{12}   C_\Theta    \Big]ds \,\, x  , v \bigg  \rangle 
\\&\geq 0, \q \dbP\mbox{-a.s.}
\end{align*}
This completes the proof.
\end{proof}

Now we are in a position to prove Theorem \ref{s4.1-cor1}.

\begin{proof}[Proof of Theorem \ref{s4.1-cor1}] 
	The ``only if" part.   Suppose  there exists an equilibrium strategy $\Theta(\cd)\in L^2(0,T;\dbR^{k\times n})$. By Theorem \ref{s4.1-th1}, we get $({\bf P_1}(\cd;\cd),{\bf P_2}(\cd),{\bf P_3}(\cd;\cd))$ as  the solution of the equation \eqref{s4.1-th1-eq3}.  Let $({ P_1}(\cd;\cd),{ P_2}(\cd),{ P_3}(\cd;\cd))=({\bf P_1}(\cd;\cd),{\bf P_2}(\cd),{\bf P_3}(\cd;\cd))$. From the equation \eqref{s4.1-th1-eq2}, we get \eqref{s4.1-cor1-eq4} and   know that there exists a $\th_0(\cd)\in L^2(0,T;\dbR^{k\times n})$ such that  \eqref{s4.1-cor1-eq2} holds.  From \eqref{s4.1-cor1-eq2}, we find that
	$$
	\begin{aligned}
		&\q \big[R(t,t)+D(t)^\top
		\big({ P_1}(t;t)+ { P_3}(t;t)+{ P_2}(t)^\top N(t,t){ P_2}(t) \big)D(t)\big]^{\dagger}\\ & \q \big[R(t,t)+D(t)^\top
		\big({ P_1}(t;t)+ { P_3}(t;t)+{ P_2}(t)^\top N(t,t){ P_2}(t) \big)D(t)\big]\Theta(t)\\&  =  	\big[R(t,t)+D(t)^\top
		\big({ P_1}(t;t)+ { P_3}(t;t)+{ P_2}(t)^\top N(t,t){ P_2}(t) \big)D(t)\big]^{\dagger}\\&\q \times \big[B(t)^\top\big( { P_1}(t;t)+{ P_3}(t;t)\big)+D(t)^\top
		\big({ P_1}(t;t) +{ P_3}(t;t)+{ P_2}(t) ^\top N(t,t){ P_2}(t) \big)C(t)\\& \qq +\big(\widehat{B}(t)^\top+ {B}(t)^\top { { P_2}}(t)^\top + D (t)^\top { { P_2}}(t) ^\top \widehat{D}^\top(t)\big ) G_2(t){ { P_2}}(t) \big],
	\end{aligned}
	$$
	which implies \eqref{s4.1-cor1-eq3}. Lastly,   \eqref{s4.1-cor1-eq5} follows from \eqref{s4.1-th1-eq1}.
	
	\ss
	
	The ``if" part.   Note that the systems \eqref{s4.1-cor1-eq1} and \eqref{s4.1-th1-eq3} are consistent. If such a $\th_0(\cd)\in L^2(0,T;\dbR^{k\times n})$  exists, then by \eqref{s4.1-cor1-eq3}, we know that the $\Th(\cd)$ defined by \eqref{s4.1-cor1-eq2} is in $ L^2(0,T;\dbR^{k\times n})$. Moreover, by \eqref{s4.1-cor1-eq2},  \eqref{s4.1-cor1-eq4} and \eqref{s4.1-cor1-eq5}, we see that  \eqref{s4.1-th1-eq1} and \eqref{s4.1-th1-eq2} hold. Thus, from Theorem \ref{s4.1-th1}, such $\Theta(\cd)$ is a  closed-loop equilibrium strategy.  
\end{proof}

\section{Proof of Theorem \ref{th2}}\label{sec-proof-main}

In this  section, we prove Theorem \ref{th2}. 
To reach this goal, we first reduce  the  systems \eqref{s4.1-cor1-eq1}--\eqref{s4.1-cor1-eq3}   into an  integral equation system, which is more convenient to be handled.

\begin{proposition}\label{s4.1-cor2}
Let Assumption  \ref{s4.1-H1} hold. Then   Problem (TI-FBSLQ)  admits a closed-loop equilibrium strategy if and only if    there exists  $\theta_0 (\cd) \in L^2(0,T;\dbR^{k\times n})$ such that the following system
\ss
\begin{equation}\label{s4.1-cor2-eq1}
	\begin{cases} \ns \ds 
		\Phi(s,t)=I+\int_{t}^{s} A_{\Th}\Phi(r,t)dr+\int_{t}^{s}C_{\Theta}\Phi(r,t)dW(r),  &\! \!\!\!\!\!\!\!\!\!\!\!\!\!0\leq t \leq s\leq  T,\\\ns \ds  \wt P_2(t)=H+\int_{t}^{T}\big( \wt P_2   A_\Theta   +  \widehat{A}_\Theta     +\widehat{C}   \wt P_2    +\widehat{D}    \wt P_2   C_\Theta    \big)ds,  & 0\!\leq \!t\! \leq\! T,\\ \ns \ds
		\wt{P}_1(t)=\mE_t \Big[ \Phi(T,t)^\top G_1(t)\Phi(T,t)+ \int_{t}^{T} \Phi(s,t)^\top \big( Q(s,t)+\Theta   ^\top R(s,t)\Theta   \\ \ns\ds \qq\qq\q  +  \wt  P_2   ^\top M(s,t)\wt P_2       + C_\Theta    ^\top \wt {P}_2   ^\top N(s,t) \wt {P}_2    C_\Theta      \big)   \Phi(s,t)ds   \Big],  &0\leq t \leq T,
	\end{cases}
\end{equation}
where
\begin{align}\label{s4.1-cor2-eq1.5}
	\nonumber	\Theta(s)\!=&\! -\!\big[R(s,s) \!+ \! D(s)^\top \!
	\big( \wt{P}_1(s)\! + \! \wt P_2(s)^\top N(s,s)\wt P_2(s) \big)D(s)\big]^{\dagger} \\  \ns \ds \nonumber & \times  \big[ B(s)^\top \wt{P}_1(s)     +  D(s)^\top
	\big(\wt{P}_1(s)  + \wt P_2(s)^\top N(s,s)\wt P_2(s) \big)C(s)   \\  \ns \ds \nonumber & + \big(\widehat{B}(s)^\top\! +  {B}(s)^\top   \wt P_{2}(s)^\top   +  D(s)^\top   \wt P_{2}(s)^\top   \widehat{D}(s)^\top\big ) G_2(s)\wt P_{2}(s) \big]   \\\ns\ds  &  +\th_0(s) - \big[ R(s,s)  + D(s)^\top
	\big(\wt{P}_1(s)\! +\! \wt P_2(s)^\top\! N(s,s)\wt P_2(s) \big)D(s)\big]^{\dagger} \\\ns\ds&  \times  \big[R(s,s)   +D(s)^\top
	\big(\wt{P}_1(s)+\wt P_2(s)^\top N(s,s)\wt P_2(s) \big)D(s)\big]\th_0(s),\nonumber
\end{align}
admits a solution $(\wt P_1(\cd),\wt{P}_2(\cd))$ satisfying\ss 
\begin{equation}\label{s4.1-cor2-eq2}
	\!\!\!\!\begin{cases} \ds
		\big[R(\cd,\cd) \! + \! D(\cd)^\top
		\big(\wt{P}_1(\cd) \! + \! \wt P_2(\cd)^\top N(\cd,\cd)\wt P_2(\cd) \big)D(\cd)\big]^{\dagger}  \\\ns\ds \times \big[ \!B(\cd)^\top \wt{P}_1(\cd)    +  D(\cd)^\top
		\big( \wt{P}_1(\cd)   +  \wt P_2(\cd)^\top N(\cd,\!\cd)\wt P_2(\cd) \big)C(\cd)  \\\ns \ds \q +  \big(\widehat{B}(\cd)^\top\!\! +\! {B}(\cd)^\top \wt P_{2}(\cd)^\top \!\!+\! D(\cd)^\top \! \wt P_{2}(\cd)^\top \! \widehat{D}(\cd)^\top\big ) G_2(\cd)\wt P_{2}(\cd) \big] \! \in \! L^2(0,T;\dbR^{k\times n}),\\\ns \ds 
		\mathcal{R}\!\left( R(t,t)\!+\!\! D(t)^\top\!
		\big(\wt{P}_1(t)\!+\!\wt P_2(t)^\top\! N(t,t)\wt P_2(t) \big)D(t) \right) \supseteq \\  \ns \ds     \mathcal{R}\Big( B(t)^\top\! \wt{P}_1(t)  \!   +   D^\top\!(t) 
		\big(\wt{P}_1(t)   +   \wt P_2(t)^\top\! N(t,t)\wt P_2(t) \big)C(t)     \\\ns \ds\q\,  + \big(\widehat{B}(t)^\top  \! +  \! {B}(t)^\top \wt P_{2}(t)^\top  \! +  \! D(t)^\top \wt P_{2}(t)^\top  \widehat{D}(t)^\top\big ) G_2(t)\wt P_{2}(t) \Big), \hspace{4.25em} \ae t\in[0,T], \\\ns \ds 
		R(t,t) +  D(t)^\top \wt{P}_1(t) D(t) + D(t)^\top \wt P_2(t)^\top N(t,t)	\wt P_2(t)  D(t)     \geq 0,\hspace{4em} \ae t\in[0,T].
	\end{cases}
\end{equation}
\end{proposition}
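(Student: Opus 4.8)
The plan is to prove the proposition by showing that the Riccati system \eqref{s4.1-cor1-eq1}--\eqref{s4.1-cor1-eq5} of Theorem \ref{s4.1-cor1} is equivalent to the integral system \eqref{s4.1-cor2-eq1}--\eqref{s4.1-cor2-eq2} (for each fixed $\theta_0$), after which the statement follows verbatim from Theorem \ref{s4.1-cor1}. The bridge between the two formulations is the substitution
\[
\wt P_1(t) := P_1(t;t)+P_3(t;t), \qquad \wt P_2(t) := P_2(t),
\]
which is forced by the observation that the coefficient formula \eqref{s4.1-cor1-eq2} and the conditions \eqref{s4.1-cor1-eq3}--\eqref{s4.1-cor1-eq5} see $P_1,P_3$ \emph{only} through the diagonal combination $P_1(t;t)+P_3(t;t)$ and see $P_2$ only through its single-variable value. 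Under this substitution the defining relation \eqref{s4.1-cor1-eq2} becomes \eqref{s4.1-cor2-eq1.5} and the three conditions \eqref{s4.1-cor1-eq3}--\eqref{s4.1-cor1-eq5} become the three lines of \eqref{s4.1-cor2-eq2}, so the only real content is to match the two systems of equations themselves.

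For the forward implication I would argue as follows. The second equation of \eqref{s4.1-cor1-eq1} is a linear ODE in $s$ for $P_2$; integrating it from $t$ to $T$ with $P_2(T)=H$ reproduces exactly the $\wt P_2$-equation in \eqref{s4.1-cor2-eq1}. Next, adding the first and third equations of \eqref{s4.1-cor1-eq1} and writing $\Pi(s;t):=P_1(s;t)+P_3(s;t)$, one finds that $\Pi$ solves the Lyapunov equation
\[
\frac{d\Pi}{ds}+\Pi A_\Theta+A_\Theta^\top\Pi+C_\Theta^\top\Pi C_\Theta+S(s,t)=0,\qquad \Pi(T;t)=G_1(t),
\]
with $S(s,t):=Q(s,t)+\Theta^\top R(s,t)\Theta+P_2^\top M(s,t)P_2+C_\Theta^\top P_2^\top N(s,t)P_2 C_\Theta$. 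The key step is the Feynman--Kac representation of $\Pi(t;t)$: applying It\^o's formula to $s\mapsto \Phi(s,t)^\top\Pi(s;t)\Phi(s,t)$, where $\Phi$ is the fundamental matrix solving the first (SDE) line of \eqref{s4.1-cor2-eq1}, all drift contributions involving $A_\Theta$ and $C_\Theta$ cancel and one is left with $d[\Phi^\top\Pi\Phi]=-\Phi^\top S\Phi\,ds+(\cds)dW$. Integrating over $[t,T]$, taking $\mE_t$, and using $\Phi(t,t)=I$ and $\Pi(T;t)=G_1(t)$ yields precisely the $\wt P_1$-equation of \eqref{s4.1-cor2-eq1} (with $P_2=\wt P_2$ making $S$ match its integrand).

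For the converse, given $\theta_0$ and a solution $(\wt P_1,\wt P_2)$ of \eqref{s4.1-cor2-eq1} satisfying \eqref{s4.1-cor2-eq2}, the strategy $\Theta$ is already determined by \eqref{s4.1-cor2-eq1.5} as an $L^2$ function of $s$, its $L^2$-membership being guaranteed by the first line of \eqref{s4.1-cor2-eq2}. With this $\Theta$ fixed I would set $P_2:=\wt P_2$ and solve the three linear Lyapunov ODEs in \eqref{s4.1-cor1-eq1} backward from the prescribed terminal data, each having a unique solution. The consistency $P_1(t;t)+P_3(t;t)=\wt P_1(t)$ then comes for free: since $\Pi=P_1+P_3$ solves the very Lyapunov equation above with the same $\Theta$ and $P_2=\wt P_2$, the Feynman--Kac computation of the previous paragraph forces $\Pi(t;t)$ to equal the right-hand side of the $\wt P_1$-equation, which is $\wt P_1(t)$ by hypothesis. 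Hence \eqref{s4.1-cor1-eq2} coincides with \eqref{s4.1-cor2-eq1.5} and \eqref{s4.1-cor1-eq3}--\eqref{s4.1-cor1-eq5} coincide with \eqref{s4.1-cor2-eq2}, so Theorem \ref{s4.1-cor1} applies and delivers a closed-loop equilibrium strategy.

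The ODE integration and the bookkeeping of the substitution are routine; the one place requiring genuine care is the Feynman--Kac step, and the main obstacle I anticipate is the integrability needed to justify It\^o's formula and the vanishing of the martingale's conditional expectation, since $\Theta$ is only $L^2$ in time so that $A_\Theta,C_\Theta$ are merely $L^2$-in-time coefficients. I would control this using that $A,B,C,D$ are bounded and $\Theta\in L^2$: a Gronwall estimate on $\mE|\Phi(s,t)|^p$ (with $\int_t^s|A_\Theta|dr<\infty$ and $\int_t^s|C_\Theta|^2dr<\infty$) gives finite moments of the fundamental matrix, so all integrals and conditional expectations are well defined and the stochastic integral is a true martingale, with a standard localization/truncation argument available if the integrand's square-integrability needs to be verified directly.
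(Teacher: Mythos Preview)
Your proposal is correct and follows essentially the same route as the paper: both directions are reduced to Theorem \ref{s4.1-cor1} via the substitution $\wt P_1(t)=P_1(t;t)+P_3(t;t)$, $\wt P_2=P_2$, with the Feynman--Kac step carried out by applying It\^o's formula to $s\mapsto \Phi(s,t)^\top\Pi(s;t)\Phi(s,t)$ exactly as you describe, and the converse handled by solving the linear ODEs \eqref{s4.1-cor1-eq1} for the given $\Theta$ and then recovering $\wt P_1(t)=P_1(t;t)+P_3(t;t)$ from the same representation. The paper does not spell out the integrability justification you flag, but your Gronwall/moment argument is the standard one and suffices.
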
	
\begin{proof}
The ``only if" part.  By Theorem \ref{s4.1-cor1}, we know that there exists  $\theta_0 (\cd) \in L^2(0,T;\dbR^{k\times n})$ such that the equations  \eqref{s4.1-cor1-eq1}--\eqref{s4.1-cor1-eq5} admit a solution $(P_1(\cd;\cd),P_2(\cd),P_3(\cd;\cd))$. Let
\begin{equation}\label{s4.1-cor2-pf-eq1}
	\begin{cases}
		\wt{P}(s;t):=P_1(s;t)+P_3(s;t),\\
		\wt{P}_1(s):=\wt P(s;s),\q \wt P_2(s):=P_2(s).
	\end{cases}
\end{equation}
Apply Ito's formula to $s\to   \Phi(s,t)^\top \wt P(s;t)\Phi(s,t)$,  we have
\begin{align*}
&	\wt {P}_1(t)-\Phi(T,t)^\top G_1(t)\Phi(T,t)\\&=\int_{t}^{T}\Phi(s,t)^\top \big(  Q(s,t)+\Theta   ^\top R(s,t)\Theta   +    P_2   ^\top M(s,t) P_2    \\&\q  + C_\Theta    ^\top P_2   ^\top N(s,t)P_2    C_\Theta      \big)\Phi(s,t)ds- \int_{t}^{T} \big(\Phi(s,t)^\top \wt{P}(s,t)C_{\Th}    \Phi(s,t) \\&\q  + \Phi(s,t)^\top   C_{\Th}   ^\top \wt{P}(s,t)\Phi(s,t)   \big)dW(s),
\end{align*}
which implies  the last equation in \eqref{s4.1-cor2-eq1}. Combining  \eqref{s4.1-cor1-eq1}--\eqref{s4.1-cor1-eq5},  we see \eqref{s4.1-cor2-eq2} holds.

\ss

The ``if" part.  Since there exists  $\theta_0 (\cd) \in L^2(0,T;\dbR^{k\times n})$ such that the equations \eqref{s4.1-cor2-eq1}--\eqref{s4.1-cor2-eq2} admit a solution $(\wt P_1(\cd),\wt{P}_2(\cd))$. With $(\wt P_1(\cd),\wt{P}_2(\cd))$ and  $\theta_0 (\cd)$, we choose $\Theta(\cd)$ as in \eqref{s4.1-cor2-eq1.5}, which is in $ L^2(0,T;\mathbb{R}^{k\times n})$ by \eqref{s4.1-cor2-eq2}. Consider the following system:
\begin{equation}\label{11.28-eq5}
	\begin{cases}\ns\ds 
		\frac{dP_1^*(s;t)}{ds} \!+ \! P_{1}^*(s;t) A_\Theta     \!+ A_\Theta   ^\top  P_1^*(s;t) \!  \\ \ns\ds\q + C_\Theta   ^\top  P_{1}^*(s;t)C_\Theta      + Q(s,t) \!+\!\Th^\top R(s,t)\Th\!=\!0,  &   0\!\leq\! t\!\leq\! s\!\leq\! T,\\ \ns\ds 
		\frac{dP_2^*   }{ds}+ P_2^*   A_\Theta   +  \widehat{A}_\Theta     +\widehat{C}   P_2^*    +\widehat{D}    P_2^*   C_\Theta    =0, &  0\!\leq\! t\!\leq\! s\!\leq\! T,\\ \ns\ds
		\frac{dP_3^*(s;t)}{ds}+ P_3^*(s;t)A_\Theta    	+A_\Theta    ^\top P_3^*(s;t)   +C_\Theta    ^\top P_3^*(s;t)C_\Theta    \\ \ns\ds  \q  +{P_2^*}   ^\top M(s,t)P_2^*      + C_\Theta    ^\top {P_2^*}   ^\top N(s,t) 	{P_2^*}    C_\Theta     =0,  &  0\!\leq\! t\!\leq\! s\!\leq\! T,\\\ns\ds 
		P_1^*(T;t)=G_1(t),	P_2^*(T)=H, 	P_3^*(T;t)=0,  & 0\!\leq\! t\!\leq\! T.
	\end{cases}
\end{equation}

Since $P_2^*(\cd)$ and $\wt P_2(\cd)$ solve the same differential equation, we get that $P_2^*(\cd)=\wt P_2(\cd)$ by the uniqueness of the solution to the second equation of \eqref{11.28-eq5}.

Set $\wt P_1^*(s;t):=P_1^*(s;t)+P_3^*(s;t)$.
Similar to the proof of the ``only if" part, we can obtain a representation of $\wt{P}_1^*(t;t)$, and consequently deduce that $ \wt{P}_1^*(t;t)=\wt{P}_1(t)$. Therefore,  $\Theta(\cd)$  can be  rewritten  as
\begin{align*}
	\Theta(s)=& -\big[R(s,s)+D(s)^\top
	\big( P_1^*(s;s)+P_3^*(s;s) +P_2^*(s)^\top N(s,s)P_2^*(s) \big)D(s)\big]^{\dagger}  \\&  \ds\times   \big[B(s)^\top \big(P_1^*(s;s)  +\!P_3^*(s;s)\big) \!  + \! D(s)^\top
	\big(P_1^*(s;s) \!+\!P_3^*(s;s)\!+\! P_2^*(s)^\top N(s,s)P_2^*(s) \big)\\& \q \times C(s)     + \big(\widehat{B}(s)^\top \! + \! {B}(s)^\top P_{2}^*(s)^\top   +\! D(s)^\top  P_{2}^*(s)^\top   \widehat{D}(s)^\top\big ) G_2(s)P_{2}^*(s) \big]  \\& + \th_0(s)-\big[R(s,s)+D(s)^\top
	\big(P_1^*(s;s)+P_3^*(s;s) +\!P_2^*(s)^\top\!\! N(s,\!s)P_2^*(s) \big)D(s)\big]^{\dagger}\\& \times  \big[R(s,\!s)\! + \!\!D(s)^\top\!
	\big(P_1^*(s;\!s)\!+\!P_3^*(s;\!s)\!+\!P_2^*(s)^\top\!\! N(s,\!s)P_2^*(s) \big)D(s)\big]\th_0(s).
\end{align*}
Combining \eqref{s4.1-cor2-eq2} and Theorem \ref{s4.1-cor1}, the sufficiency is proved.
\end{proof}	

Now, we prove Theorem \ref{th2} via Proposition \ref{s4.1-cor2}.

\begin{proof}[Proof of Theorem \ref{th2}]
We divide the proof into four steps.

\ms

\textbf{Step 1}. In this step, we prove the boundedness of $\Theta(\cd)$, $\wt P_1(\cd)$ and $\wt{P}_2(\cd)$ in \eqref{s4.1-cor2-eq1}--\eqref{s4.1-cor2-eq1.5}.

By Assumption \ref{s4.1-H3} and the last equation in \eqref{s4.1-cor2-eq1}, we get that  $\wt P_1(\cd)\geq 0$.

Since $m=n=k=1$,   $\Theta(\cd)$ can be rewritten as
\begin{align*}
	\Th(s)&  = \big(R(s,s) + D(s)^2\wt P_1(s) + D(s)^2N(s,s)\wt P_2(s)^2\big)^{-1} \\ & \q \times\big \{\big(B(s) +  D(s)C(s)\big)\wt{P}_1(s)      +           \widehat{B}(s)G_2(s)\wt P_2(s) \\&\qq + \big[  D(s)C(s)N(s,s) + \big(B(s) + D(s)\widehat{D}(s)\big)G_2(s)\big]      \wt P_2(s)^2\big\} \\&=:I_1(s)+I_2(s),
\end{align*}
where
$$
\begin{aligned}
	|I_1(s)|&=\left|\frac{\big(B(s)+D(s)C(s)\big)\wt{P}_1(s)}{R(s,s)+D (s)^2\wt{P}_1(s)+D(s)^2N(s,s)\wt P_2(s)^2} \right|\\
	& \leq \max\left\{\frac{|B(s)+D(s)C(s)|}{\delta},\frac{|B(s)+D(s)C(s)|}{D(s)^2}\right\}\leq \frac{\cC}{\delta},\q \q \ae s\in [0,T],
\end{aligned}
$$
and
$$
\begin{aligned}
	|I_2(s)|&=\left|\frac{\widehat{B}(s)G_2(s)\wt P_2(s)\! +\!\big[D(s)C(s)N(s,s)\!+\!\big(B(s)\!+\!D(s)\widehat{D}(s)\big)G_2(s)\big]\wt P_2(s)^2}{R(s,s)\!+\! D(s)^2\wt{P}_1(s)\!+\!D(s)^2N(s,s)\wt P_2(s)^2}\right|\\
	& \leq \max\left\{ \frac{\cC}{\delta},\frac{\cC}{D(s)^2\delta} \right\}\leq \max\left\{ \frac{\cC}{\delta},\frac{\cC}{\delta^2} \right\}, \q \ae s\in [0,T].
\end{aligned}
$$
Hence, we have 
$$
|\Theta(t)|<\cC,\qq \ae t\in [0,T].
$$
From the second equation in \eqref{s4.1-cor2-eq1},  we have
\begin{align*}
	|\wt  P_2(t)|&\leq |H|+\int_{t}^{T}\Big(\big|A_{\Theta}     +\widehat{C}     +\widehat{D}     C_{\Theta}      \big| \times\big| \wt P_2     \big|+\big|\widehat{A}_{\Theta}     \big| \Big)ds,
\end{align*}
which, together with Gronwall's inequality, implies that 
$$\wt P_2(s)<\cC,\qq s\in [0,T].$$ 
Similarly, we can get
$$
\ds  \sup_{t\in[0,T]}\sup_{s\in[t,T]}\mE|\Phi(s,t)|^2<\cC.
$$
Then, 
\begin{align*}
	\wt{P}_1(t)\!&=\!\mE  \Big\{ \!\Phi(T,t)^2 G_1(t)\!+\!\! \int_{t}^{T}\!\! \Phi(s,t)^2 \big[ Q(s,t)\!+\!\Theta     ^2 R(s,t) +  (\wt P_2)^2M(s,t)\! +\! C_\Theta      ^2  (\wt P_2)^2N(s,t)   \big]  ds \!   \Big\}\\&=\!G_1(t)\mE\Phi(T,t)^2\!+\!\int_{t}^{T}\! \big[ Q(s,t)\!+\!\Theta     ^2 R(s,t) +  (\wt P_2)^2M(s,t)\! +\! C_\Theta      ^2  (\wt P_2)^2N(s,t)\!\big]  \mE \Phi(s,t)^2 ds\\&<\cC.
\end{align*}

\ms

\textbf{Step 2}. In this step, we establish   estimates for $\wt P_1(\cd)$ and $\wt{P}_2(\cd)$. 

Given $\Theta_{i}$ ($i=1,2$), and
$$
\begin{cases}
	\ds\frac{d\wt P_2^i     }{ds} =-\big(\wt P_2^i     A_{\Theta_i}     +\widehat{A}_{\Theta_i}     +\widehat{C}     \wt P_2^i     +\widehat{D}     \wt P_2^i     C_{\Theta_i}     \big),\q s\in [0,T],\\\ns\ds
	\wt P_2^i(T)=H,
\end{cases}
$$
we have
$$
\begin{aligned}
	&\wt P_2^1(t)-\wt P_2^2(t)\\
	&= \int_{t}^{T}\big[  \big(\wt P_2^1     -\wt P_2^2     \big)A_{\Th_1}      +\wt P_2^2     B      \big(\Theta_1     -\Theta_2     \big)+\widehat{B}     \big(\Theta_1     -\Theta_2     \big)  \\&\qq\q +\widehat{C}      \big(\wt P_2^1     \!-\!\wt P_2^2     \big)  \!+\!\widehat{D}     \big(\wt P_2^1     \!-\!\wt P_2^2     \big)C_{\Th_1}     \!+\!\widehat{D}     \wt P_2^2     D     \big(\Theta_1     \!-\!\Theta_2     \big) \big]ds.
\end{aligned}
$$
By Gronwall's inequality,  we get that
\begin{align}  \label{th1.3-eq1}
\big	|\wt P_2^1(t)-\wt P_2^2(t)\big|\leq& \int_{t}^{T} \big(\cC \big|\wt P_2^1     -\wt P_2^2     \big|+\cC\big|\Theta_1     -\Theta_2     \big|\big)ds\\  \nonumber
	\leq & \cC \int_{t}^{T}\big|\Theta_1     -\Theta_2     \big|ds\\  \nonumber
	\leq & \cC (T-t) \big|\Theta_1(\cd)-\Theta_2(\cd)\big|_{L^\infty(t,T;\dbR^{})},
\end{align}
which yields 
\begin{equation}\label{11.28-eq6}
\sup_{s\in[t,T]}\big|\wt P_2^1(s)-\wt P_2^2(s)\big| \leq \cC (T-t) \big|\Theta_1(\cd)-\Theta_2(\cd)\big|_{L^\infty(t,T;\dbR)}.
\end{equation} 
Similarly,  for  $0\leq t\leq \tau \leq T$, consider the following equation:
$$
\begin{cases}
	d\Phi^i(s,t)=A_{\Th_i}     \Phi^i(s,t)ds+C_{\Th_i}     \Phi^i(s,t)dW(s),\q s\in [t,\tau],\q i=1,2,\\\ns\ds 
	\Phi^i(t,t)=I.
\end{cases}
$$
By standard estimate for SDEs(e.g.,\cite[Theorem 3.2]{Lu-2021}), we have
\begin{align}\label{th1.3-eq2}
	\mE \sup_{s\in[t,\tau]}\big| \Phi^1(s,t)-\Phi^2(s,t)\big|^2&\leq \cC \int_{t}^{\tau}\big|\Theta_1     -\Theta_2     \big|^2ds\\ \nonumber
	&\leq \cC (\tau-t) \big |\Theta_1(\cd)-\Theta_2(\cd)\big|^2_{L^\infty(t,\tau;\dbR^{})},
\end{align}
where the constant $\cC$ is independent of the choice of $t$ and $\tau$. 

For $i=1, 2$, let 
\begin{align*}
	\wt{P}_1^i(t)=&\mE \Big\{\Phi^i(T,t)^\top G_1(t)\Phi^i(T,t)+ \int_{t}^{T} \Phi^i(s,t)^\top \big[ Q(s,t)+\Theta_i     ^\top R(s,t)\Theta_i       \\&\q  + (\wt P_2^i ) ^\top M(s,t)\wt P_2^i      + C_{\Th_i}      ^\top  (\wt P_2^i) ^\top N(s,t)\, \wt P_2^i   \,   C_{\Th_i}        \big]   \Phi^i(s,t)ds   \Big\}.
\end{align*}
From \eqref{th1.3-eq1} and \eqref{th1.3-eq2}, we obtain that
\begin{eqnarray}\label{th1.3-eq3}
	&&	\big|\wt{P}_1^1(t)-\wt{P}_1^2(t)\big | \nonumber\\  &&\leq  \mE \bigg\{ \big|G_1(t) \big( \Phi^1(T,t)^2-\Phi^2(T,t)^2 \big)\big| + \int_{t}^{T} \bigg[ \Big| Q(s,t) \big( \Phi^1(s,t)^2-\Phi^2(s,t)^2 \big) \Big | \nonumber \\   && \q +   \Big| R(s,t)   \big( \Phi^1(s,t)^2\Theta_1     ^2\!-\!\Phi^2(s,t)^2\Theta_2     ^2 \big)   \Big|\! + \! \Big| M(s,t)   \big( \Phi^1(s,t)^2  (\wt P_2^1)^2\!-\!\Phi^2(s,t)^2  (\wt P_2^2)^2 \big)   \Big| \nonumber \\   && \q  + \Big| N(s,t)   \big( \Phi^1(s,t)^2 (\wt P_2^1)^2 C_{\Th_1}     ^2 -\Phi^2(s,t)^2 (\wt P_2^2)^2 C_{\Th_2}     ^2 \big)   \Big| \bigg] ds  \bigg\} \nonumber\\ && \nonumber \leq \cC \bigg\{\Big( \mE\big |  \Phi^1(T,t)-\Phi^2(T,t)\big |^2\Big)^{1/2} + \int_{t}^{T} \bigg[  \Big( \mE\big |  \Phi^1(s,t)-\Phi^2(s,t)\big |^2\Big)^{1/2}  \\  &&  \q +  \Big( \mE\big |  \Phi^1(s,t)\Theta_1     -\Phi^2(s,t)\Theta_2     \big|^2\Big)^{1/2} + \Big( \mE\big |  \Phi^1(s,t)\wt P_2^1     -\Phi^2(s,t)\wt P^2_2     \big|^2\Big)^{1/2} \nonumber\\   && \q +   \Big( \mE\big |  \Phi^1(s,t)\wt P_2^1     C_{\Th_1}     -\Phi^2(s,t)\wt P^2_2     C_{\Th_2}     \big |^2\Big)^{1/2} \bigg]ds   \bigg\}\nonumber
	\\ &&  \leq \cC \bigg[ \Big( \int_{t}^{T} \big|\Theta_1(s)-\Theta_2(s)\big|^2ds \Big)^{1/2}  \\   &&\q   + \int_{t}^{T}  \Big( \int_{t}^{s}  \big|\Theta_1(\tau)-\Theta_2(\tau)\big|^2d\tau+ \big|\Theta_1(s)-\Theta_2(s)\big|^2 + \big |\wt P_2^1(s)-\wt P_2^2(s)\big|^2 \Big)^{1/2} ds  \bigg] \nonumber \\  &&\leq    \,\cC  \Big(\int_{t}^{T} \big|\Theta_1(s)-\Theta_2(s)\big|^2ds\Big)^{1/2} \nonumber \\ &&\leq   \, \cC (T-t)^{1/2} \big|\Theta_1(\cd)-\Theta_2(\cd)\big|_{L^\infty(t,T;\dbR^{}) }.\nonumber
\end{eqnarray}

\ms

\textbf{Step 3}.  Define the mapping $F_0: L^\infty(t_0,T;\dbR^{})\to L^\infty(t_0,T;\dbR^{})$ as follows ($t_0$ is to be determined later): for $\Theta(\cd) \in L^\infty(t_0,T;\dbR^{}) $,
\begin{align*}
	F_0(\Theta(\cd))(s)=	&   -\big[R(s,s) \!+  D(s)^\top 
	\big(\wt{P}_1(s)\! +  \wt{P}_2(s)^\top\! N(s,s)\wt{P}_2(s) \big)D(s)\big]^{\dagger}  \\& \times  \big[ B(s)^\top\! \wt{P}_1(s)   +  D(s)^\top\!
	\big(\wt{P}_1(s)  + \!\wt{P}_2(s)^\top\! N(s,s)\wt{P}_2(s) \big)C(s) \! \\& \q  +  \big(\widehat{B}(s)^\top\!\! + \!  {B}(s)^\top\! \wt{P}_{2}(s)^\top \!\! + \! D(s)^\top \! \wt{P}_{2}(s)^\top  \widehat{D}(s)^\top\big ) G_2(s)\wt{P}_{2}(s) \big]   \\&+  \th_0(s)  -  \big[  R(s,s)  + D(s)^\top\!
	\big(\wt{P}_1(s)    + \wt{P}_2(s)^\top N(s,s)\wt{P}_2(s) \big)D(s)\big]^{\dagger} \\& \times  \big[R(s,s)   + D(s)^\top 
	\big(\wt{P}_1(s) \!+\! \wt{P}_2(s)^\top N(s,s)\wt{P}_2(s) \big)D(s)\big]\th_0(s), \qq \forall \, s\in [t_0,T],
\end{align*} 
where $(\wt{P}_1(\cd), \wt{P}_2(\cd))$ are the solution to the equation \eqref{s4.1-cor2-eq1}  by choosing  $\Theta(\cd)$ to be the control strategy on time interval $[t_1,T]$    in the equation \eqref{s4.1-cor2-eq1.5}.   

Given $\Theta_{i}(\cd)\in L^\infty(t_0,T;\dbR^{})$, $i=1, 2$,  
\begin{eqnarray}\label{pr-th2-eq1}
	&& \!\!\!\!\!\!\!\! F_0(\Theta_1(\cd))(s)-F_0(\Theta_2(\cd))(s)\nonumber\\ \nonumber &&\!\!\!\!\!\!\!\! = \Big\{ \Big[  \big(R(s,s)+D(s)^2\wt{P}_1^1(s)+D(s)^2\wt{P}^1_2(s)^2N(s,s)\big)^{-1}  \\ \nonumber && \q  - \big(R(s,s)+D(s)^2\wt{P}_1^2(s)+D(s)^2\wt{P}^2_2(s)^2N(s,s)\big)^{-1} \Big]  \\ \nonumber &&   \times \Big[  \big(B(s)+D(s)C(s)\big)\wt{P}_1^1(s)+\widehat{B}(s)G_2(s)\wt P^1_2(s) \\ && \q \nonumber+\big[D(s)C(s)N(s,s)  +\big(B(s)+D(s)\widehat{D}(s)\big) G_2(s)\big]\wt{P}^1_2(s)^2  \Big]  \Big\}  \\ &&   +\Big\{  \big(R(s,s)+D(s)^2\wt{P}_1^2(s) + D(s)^2\wt{P}^2_2(s)^2N(s,s)\big)^{-1}  \\ \nonumber &&\q  \times   \Big[ \big(B(s)+D(s)C(s)\big) \big( \wt{P}_1^1(s)- \wt{P}_1^2(s) \big)  +\widehat{B}(s)G_2(s) \big(\wt{P}^1_2(s)- \wt{P}^2_2(s) \big)   \\ \nonumber &&\qq+\big[D(s)C(s)N(s,s)+\big(B(s)+D(s)\widehat{D}(s)\big) G_2(s)\big] \big(\wt{P}^1_2(s)^2-\wt{P}^2_2(s)^2\big) \Big]   \Big \}\\ \nonumber
	&&\!\!\!\!\!\!\!\! \leq \frac{\cC}{\delta^2} \big( \big|\wt{P}_1^1(s)\!-\!\wt P_1^2(s)\big|  \! + \! \big| P^2_2(s)^2\!-\!P^1_2(s)^2 \big|  \big) \! \\&& \nonumber  + \! \frac{\cC}{\delta} \big(\big| \wt{P}_1^1(s)\!-\! \wt P_1^2(s) \big|\! +\! \big|\wt{P}^2_2(s)\!-\!\wt{P}^1_2(s) \big|\!  +\!\big | \wt{P}^2_2(s)^2\!-\!\wt{P}^1_2(s)^2 \big|  \big)\\  \nonumber
	&&\!\!\!\!\!\!\!\! \leq \cC \big( \big| \wt{P}_1^1(s)- \wt{P}_1^2(s) \big| + \big|\wt{P}^1_2(s)-\wt{P}^2_2(s)\big |  +\big | \wt{P}^1_2(s)^2-\wt{P}^2_2(s)^2 \big|   \big).
\end{eqnarray}
By \eqref{11.28-eq6} and \eqref{th1.3-eq3}, we get that
\begin{equation}\label{11.28-eq7}
\big| F_0(\Theta_1(\cd))(\cd)-F_0(\Theta_2(\cd))(\cd)\big|_{L^\infty(t_0,T;\dbR^{})} \leq  \cC(T-t_0)^{1/2} \big| \Theta_1(\cd)-\Theta_2(\cd)  \big|_{L^\infty(t_0,T;\dbR^{})}. 
\end{equation}
Choose $t_0$ such that $\cC (T-t_0)^{1/2}<1/2$, via Banach fixed-point theorem, we know there is a unique fixed point $\Theta^*(\cd)\in L^\infty(t_0,T;\dbR^{})$ for the mapping $F_0$. 

\ms

\textbf{Step 4}.  Define the mapping $F_1: L^\infty(t_1,t_0;\dbR^{})\to L^\infty(t_1,t_0;\dbR^{})$ as follows  ($t_1$ is to be determined later): for $\Theta(\cd) \in L^\infty(t_1,t_0;\dbR^{}) $,
\begin{align*}
		F_1(\Theta(\cd))(s) =&  \Big\{ \! -\! \big[R(s,s) \!+ \! D(s)^\top \!
	\big( \wt{P}_1(s)\!\! + \! \wt{P}_2(s)^\top N(s,s)\wt{P}_2(s) \big)D(s)\big]^{\dagger}  \\& \times \big[ B(s)^\top \wt{P}_1(s)  \!\! + \! D(s)^\top
	\big(\wt{P}_1(s)  +\! \wt{P}_2(s)^\top\! N(s,s)\wt{P}_2(s) \big)C(s)\!\\&  \q +   \big(\widehat{B}(s)^\top\!\! +\! B(s)^\top\!\! \wt{P}_{2}(s)^\top\!\! +   D(s)^\top\! \wt{P}_{2}(s)^\top\! \widehat{D}(s)^\top\big ) G_2(s)\wt{P}_{2}(s) \big]\\&   +  \th_0(s)  -  \big[ R(s,s)  + D(s)^\top
	\big(\wt{P}_1(s) + \wt{P}_2(s)^\top N(s,s)\wt{P}_2(s) \big)D(s)\big]^{\dagger}\\& \times    \big[R(s,s) \! + \! D(s)^\top \!
	\big(\wt{P}_1(s)\!+\!\wt{P}_2(s)^\top \!N(s,s)\wt{P}_2(s) \big)D(s)\big]\th_0(s) \Big \}\! \times\! \chi_{[t_1,t_0]}(s) ,\, s\in [t_1,t_0],
\end{align*} 
where $(\wt{P}_1(\cd), \wt{P}_2(\cd))$ is the solution of the equation \eqref{s4.1-cor2-eq1} by choosing  $\Theta(\cd)\chi_{[t_1,t_0]}(\cd)+\Theta^*(\cd)\chi_{[t_0,T]}(\cd)$ to be the control strategy on time interval $[t_1,T]$  in the equation \eqref{s4.1-cor2-eq1.5}. 

Given $\Theta_{i}(\cd)\in L^\infty(t_1,t_0;\dbR^{})$, $i=1,2$, similar to the proof of  \eqref{11.28-eq7}, we can show that
$$
\big| F_1(\Theta_1(\cd))(\cd)-F_1(\Theta_2(\cd))(\cd)\big|_{L^\infty(t_1,t_0;\dbR^{})} \leq  \cC(t_0-t_1)^{1/2} \big| \Theta_1(\cd)-\Theta_2(\cd)  \big|_{L^\infty(t_1,t_0;\dbR^{})}. 
$$ 
By choosing $t_0-t_1= T-t_0$,  it follows from Banach fixed-point theorem that there is a unique fixed point $\Theta^*(\cd)\in L^\infty(t_1,t_0;\dbR^{})$ for the mapping $F_1$.  Inductively, we   obtain the existence and uniqueness of the solution to the equations \eqref{s4.1-cor2-eq1}--\eqref{s4.1-cor2-eq2}. By  Proposition \ref{s4.1-cor2} and Theorem \ref{s4.1-cor1}, we complete the proof.
\end{proof}

\end{document}